\newenvironment{proof}{
\noindent {\bf Proof.\hskip 3mm}}%
{\mbox{}\hfill\rule{0.5em}{0.809em}\par}
\newtheorem{theorem}{{\bf Theorem}}[section]
\newtheorem{lemma}{{\bf Lemma}}[section]
\newtheorem{corollary}[theorem]{{\bf Corollary}}
\newtheorem{conjecture}{{\bf Conjecture }}[section]
\newtheorem{problem}[conjecture]{{\bf Problem}}
\newtheorem{fact}{{\bf Fact}}[section]
\newenvironment{pf1}{\noindent {\bf Proof of Theorem \ref{mad<4}.}}{\hfill\rule{2mm}{3mm}\par\medskip}
\newenvironment{pf2}{\noindent {\bf Proof of Theorem \ref{mad<3}.}}{\hfill\rule{2mm}{3mm}\par\medskip}
\newcommand\mad{\mathrm{mad}}
\newcommand\path{maximal bichromatic path }
\begin{document}
\setcounter{page}{1}

\title{Acyclic edge coloring of sparse graphs
\thanks{ This work is  supported by  research grants NSFC (11001055) and NSFFP (2010J05004, 2011J06001). }}
\author{ Jianfeng Hou
\\Center for Discrete Mathematics, Fuzhou University, \\Fujian, P.
R. China, 350002\\ jfhou@fzu.edu.cn
}

\date{}
\maketitle \vspace{-0.2cm}
\begin{abstract}
A proper edge coloring of a graph $G$ is called acyclic if there is
no bichromatic cycle in $G$. The acyclic chromatic index of $G$,
denoted by $\chi'_a(G)$, is the least number of colors $k$ such that
$G$ has an acyclic edge $k$-coloring. The maximum average degree of
a graph $G$, denoted by $\mad(G)$, is the maximum of the average
degree of all subgraphs of $G$.  In this paper, it is  proved that
if $\mad(G)<4$, then $\chi'_a(G)\leq{\Delta(G)+2}$; if $\mad(G)<3$,
then $\chi'_a(G)\leq{\Delta(G)+1}$. This implies that every
triangle-free planar graph $G$ is acyclically edge
$(\Delta(G)+2)$-colorable.
\end{abstract}

{\bf Keywords:}  acyclic, coloring, average degree, critical

\vspace{5mm}

\section{Introduction}

In this paper, all considered graphs are finite, simple and
undirected. We use $V(G)$, $E(G)$, $\delta(G)$ and $\Delta(G)$ (or
$V, E, \delta$ and $\Delta$ for simple) to denote the vertex set,
the edge set, the minimum degree and the maximum degree of a graph
$G$, respectively. For a vertex $v\in V(G)$, let $N(v)$ denote the
set of vertices adjacent to $v$ and $d(v)=|N(v)|$ denote the
$degree$ of $v$. Let $N_k(v)=\{x\in N(v)|d(x)=k\}$ and
$n_k(v)=|N_k(v)|$. A vertex of degree $k$ is called a {\em $k$-vertex}. We
write a {\em $k^+$-vertex} for a vertex of degree at least $k$, and
a {\em $k^-$-vertex} for that of degree at most $k$. The girth of a
graph $G$, denoted by $g(G)$, is the length of its shortest cycle.



As usual $[k]$ stands for the set $\{1,2,\ldots,k\}$.

A $proper$ $edge$ $k$-$coloring$ of a graph $G$ is a mapping $\phi$
from  $E(G)$ to the color set $[k]$ such that no pair of adjacent
edges are colored with the same color.  A proper  edge coloring of a
graph $G$ is called {\em acyclic} if there is no bichromatic cycle
in $G$. In other words, if the union of any two color classes
induces a subgraph of $G$ which is a forest.  The {\em acyclic
chromatic index} of $G$, denoted by $\chi'_a(G)$, is the least
number of colors $k$ such that $G$ has an acyclic edge $k$-coloring.


Acyclic edge coloring has been widely studied over past twenty years. The first general linear upper bound on $\chi'_a(G)$ was found by Alon et al. in \cite{Alon1991}. Namely, they proved that $\chi'_a(G)\leq{64\Delta(G)}$. This bound was improved to $16\Delta(G)$ by
Molloy and Reed \cite{molloy1998} using the  same method. In 2001, Alon,
Sudakov and Zaks \cite{Alon2001} stated the Acyclic Edge
Coloring Conjecture as follows.

\begin{conjecture} \label{conjecture}
$\chi'_a(G)\leq{\Delta(G)+2}$ for
all graphs $G$.
\end{conjecture}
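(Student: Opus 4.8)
\noindent This statement is the Acyclic Edge Coloring Conjecture of Alon, Sudakov and Zaks, which remains open in general; no complete proof is known, so what follows is the template that yields the partial results of this paper together with an honest assessment of where the obstruction lies.

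\noindent\emph{Structural route (reducibility and discharging).} Suppose the conjecture fails and let $G$ be an edge-minimal counterexample with $k=\Delta(G)+2$, so $G$ admits no acyclic edge $k$-coloring but every proper subgraph does (a ``$k$-critical'' graph). The first block of the plan is to show $G$ cannot contain any of a short list of local configurations. The mechanism is always the same: pick an edge $uv$, acyclically $k$-color $G-uv$ by minimality, and argue that the coloring extends. Writing $F(u)=[k]\setminus\{\phi(e):e\ni u\}$ for the colors free at $u$ in $G-uv$, the edge $uv$ can be colored by any $c\in F(u)\cap F(v)$ that does not close a bichromatic cycle through $uv$; since $|F(u)\cap F(v)|\ge k-d(u)-d(v)+2$, a short counting argument shows that if $u,v$ and their neighborhoods are ``small'' then either such a $c$ exists or a usable color can be liberated by interchanging the two colors along a maximal bichromatic path ending at $u$ or $v$. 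Iterating this forces constraints such as: a $2$-vertex has no low-degree neighbor, a $3$-vertex has few $3^-$-neighbors, two adjacent small vertices force a third vertex to be large, and so on. The second block is discharging: give $v$ the initial charge $d(v)$, use the excluded configurations to forbid every ``charge-poor'' local picture, and move charge from large vertices to small ones by carefully chosen rules so that the final charge of every vertex is at least the relevant sparsity threshold; summing over $V(G)$ then contradicts $\mad(G)<4$ (resp.\ $<3$). This is exactly the route by which one proves $\mad(G)<4\Rightarrow\chi'_a(G)\le\Delta+2$ and $\mad(G)<3\Rightarrow\chi'_a(G)\le\Delta+1$, and, via the folklore bound $\mad(G)<\frac{2g(G)}{g(G)-2}$ for planar $G$, the stated corollary for triangle-free planar graphs.

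\noindent\emph{Probabilistic route.} Color the edges independently and uniformly at random and use the Lov\'asz Local Lemma to avoid monochromatic adjacent pairs and short bichromatic cycles; this yields $\chi'_a(G)\le C\Delta(G)$, and the sharpest known refinements (entropy compression / direct counting) lower the constant $C$ but keep it bounded away from $1$, so the additive ``$+2$'' is out of reach by this method alone.

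\noindent\emph{Main obstacle.} Both methods are local, whereas a bichromatic cycle is not: no bounded-radius reducible configuration can ``see'' an arbitrarily long cycle, so the discharging approach only succeeds when sparsity already kills long bichromatic cycles (large girth, small $\mad$), while in a dense $k$-critical graph the minimum degree can be of order $\Delta$, there is no excess charge to redistribute, and the path-interchange step runs out of free colors once $k=\Delta+2$. Bridging from bounded-$\mad$ (or large girth) graphs to \emph{all} graphs is precisely the open problem, and I would not expect the plan above to close it without a genuinely new global idea; a realistic intermediate target is to enlarge the $\mad$ threshold, or to prove $\chi'_a(G)\le\Delta+2$ for every planar graph.
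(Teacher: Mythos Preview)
You are right that this statement is an open conjecture; the paper does not prove it either, but merely states it and then establishes the partial cases $\mad(G)<4$ and $\mad(G)<3$. Your sketch of the reducibility-plus-discharging route is an accurate summary of exactly what the paper does for those partial results, and your assessment of why the method stalls short of the full conjecture is sound, so there is nothing to correct here.
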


Obviously, $\chi'_a(G)\le \Delta(G)+1$ for graphs $G$ with
$\Delta(G)\le 2$. Andersen et al. \cite{and2012} showed that if $G$
is a connected graph with $\Delta(G)\le 3$ different from $K_4$ and
$K_{3,3}$, then $\chi'_a(G)\leq 4$. Hence Conjecture
\ref{conjecture} holds for $\Delta(G)\le 3$. This conjecture was
also verified for some special classes of graphs, including
non-regular graphs with maximum degree at most four \cite{basa2009},
outerplanar graphs \cite{hou2009, mns07a}, series-parallel graphs
\cite{hou2008}, planar graphs with girth at least five \cite{boro2009, hou2008},
graphs with large girth \cite{Alon2001}, and so on.


Recall that the $maximum$ $average$ $degree$ of a graph $G$, denoted
by $\mad(G)$, is the maximum of the average degree of all of its
subgraphs, i.e., $\mad(G)=\max_{H\subseteq
G}\frac{2|E(H)|}{|V(H)|}$. Recently, Basavaraju and Chandran
\cite{basa2012} consider the acyclic edge coloring of sparse graphs
and proved that if $\mad(G)<4$, then $\chi'_a(G)\leq \Delta(G)+3$.
In this paper, the bound is improved  to $\Delta(G)+2$.

\begin{theorem} \label{mad<4}
Let $G$ be a  graph with $\mad(G)<4$. Then $\chi'_a(G)\leq \Delta(G)+2$.
\end{theorem}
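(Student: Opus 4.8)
The plan is to argue by contradiction. Let $G$ be a counterexample to Theorem~\ref{mad<4} with $|E(G)|$ as small as possible, and put $k=\Delta(G)+2$. Since the conclusion is immediate when $\Delta(G)\le 2$ (then $G$ is a disjoint union of paths and cycles and $\chi'_a(G)\le 3\le k$), we may assume $\Delta(G)\ge 3$, so $k\ge 5$. As neither $\mad$ nor $\Delta$ increases upon deleting an edge, minimality gives that $G-e$ has an acyclic edge $k$-coloring for every $e\in E(G)$; throughout, I would combine such colorings with recoloring moves along a \path to deduce structural facts about $G$.

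The first and main task is to compile a list of configurations that cannot appear in $G$. The easy items are $\delta(G)\ge 2$ (a pendant edge lies on no cycle, so any proper extension is acyclic and there are $k-\Delta(G)=2$ available colors), that no two $2$-vertices are adjacent, that a $2$-vertex has no $3^-$-neighbor, and more refined restrictions on the degrees of the neighbors of $2$- and $3$-vertices and on how many light neighbors a vertex of a given degree may have. Each such claim is proved by deleting one edge $e=xy$ incident with the light part, fixing an acyclic edge $k$-coloring $\phi$ of $G-e$, and showing $e$ can be (re)colored: a color $\alpha$ is forbidden for $e$ only if it occurs on an edge incident with $x$ or with $y$, or if $x$ and $y$ are joined in $G-e$ by an $(\alpha,\beta)$-bichromatic path for some color $\beta$ that is forced near $e$; one then counts these bad colors against $k$, and where the count is too large one first interchanges the two colors along a \path at a high-degree endpoint of $e$ to destroy the obstructing path and only then colors $e$.

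With the structural lemmas available I would run a discharging argument. Assign each vertex $v$ the charge $\mu(v)=d(v)-4$; since every subgraph of $G$, and in particular $G$ itself, has average degree below $4$, we get $\sum_{v\in V(G)}\mu(v)=2|E(G)|-4|V(G)|<0$. Only $2$-vertices (charge $-2$) and $3$-vertices (charge $-1$) carry negative charge, so it suffices to move charge into them from their neighbors of degree at least $5$, the reducibility lemmas guaranteeing both that such neighbors exist in sufficient supply and that the donors keep nonnegative charge; a typical rule set has a $d$-vertex with $d\ge 5$ give $1$ to each adjacent $2$-vertex and $\frac12$ to each adjacent $3$-vertex, with auxiliary rules for $3$-vertices lying on few such edges. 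One then checks $\mu'(v)\ge 0$ for every vertex, whence $0>\sum_v\mu(v)=\sum_v\mu'(v)\ge 0$, a contradiction, proving Theorem~\ref{mad<4}.

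The principal obstacle is the reducibility step, not the discharging. In contrast with ordinary edge coloring, extending an acyclic edge coloring is delicate, because for each candidate color of the new edge one must also exclude the appearance of a bichromatic cycle, and the recoloring operations one has available---swapping the two colors along a maximal bichromatic path terminating at a high-degree endpoint---interact, so the analysis around $2$- and $3$-vertices becomes a lengthy case study; producing a list of forbidden configurations tight enough to be balanced by the plain charge $d(v)-4$ is the crux. Granting the theorem, the planar statement is immediate: a triangle-free planar graph and each of its subgraphs has girth at least $4$, hence $|E|<2|V|$ by Euler's formula, so $\mad<4$ and therefore $\chi'_a\le\Delta+2$.
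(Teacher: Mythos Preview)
Your plan is exactly the paper's: take a minimal counterexample, establish reducibility lemmas for the resulting $(\Delta+2)$-critical graph, and discharge from the initial charge $d(v)-4$. But the structural facts you list are too weak to make the charges balance, and the sample rule set you give actually fails in two places.

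First, ``a $2$-vertex has no $3^-$-neighbor'' is not enough: a $4$-vertex has zero surplus, so a $2$-vertex with a $4$-neighbor would finish with charge at most $-1$. The paper proves the sharper fact that both neighbors of a $2$-vertex have degree at least $5$ (Lemma~\ref{neighbor of 2-vertex} with $k=\Delta+2$).

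Second, sending $\tfrac12$ to every adjacent $3$-vertex bankrupts a $5$-vertex: a $5$-vertex with no $2$-neighbor may have three $3$-neighbors (Lemma~\ref{5-vertex}), and $1-3\cdot\tfrac12<0$. The paper's remedy is a two-rate rule for $3$-vertices---$\tfrac12$ from each $5^+$-neighbor if the $3$-vertex has a $4$-neighbor, $\tfrac13$ from each neighbor otherwise---and this is justified by two further lemmas you do not have: if a $3$-neighbor of $v$ is adjacent to a $4$-vertex and $d(v)=5$ then $n_3(v)\le 2$ (Lemma~\ref{3-vertex-adjacent-4-vertex}), and for $d(v)\ge 5$ one has $n_2(v)\le d(v)-4$, with equality forcing $n_3(v)=0$ (Lemma~\ref{k-vertex-2-ver}). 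These lemmas are exactly the ``lengthy case study'' you defer, and without statements of this strength no tweak of the discharging rules will close.
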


\begin{theorem} \label{mad<3}
Let $G$ be a  graph with $\mad(G)<3$. Then $\chi'_a(G)\leq \Delta(G)+1$.
\end{theorem}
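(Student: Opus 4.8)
The plan is to argue by contradiction using the discharging method together with a careful analysis of bichromatic paths, exactly as is standard for results of this type. Suppose the statement fails, and let $G$ be a counterexample minimizing $|V(G)|+|E(G)|$; thus $\chi'_a(G)\geq \Delta(G)+2$ but every proper subgraph of $G$ has an acyclic edge $(\Delta(G)+1)$-coloring (note $\mad$ is monotone under subgraphs, so $\Delta$ may be taken as $\Delta(G)$ throughout). Write $k=\Delta(G)+1$ and fix the color set $[k]$. First I would record the easy structural facts forced by minimality: $G$ is connected, $\delta(G)\geq 2$, and there is no edge $uv$ with $d(u)=2$ adjacent to a vertex of small degree in the obvious forbidden ways. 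The main tool is the following: for an edge $uv$, a coloring $\phi$ of $G-uv$, and colors $\alpha\neq\beta$, an $(\alpha,\beta)$-\emph{critical path} with respect to $uv$ is a \path\ joining $u$ and $v$ whose edges alternate between $\alpha$ and $\beta$; extending $\phi$ to $G$ by coloring $uv$ with a color $c$ fails to be acyclic precisely when $c$ lies on such a critical path for some second color. I would collect the counting lemmas that say: if too few colors are "forbidden" at $uv$ (either because they are already used at $u$ or $v$, or because using them creates a bichromatic cycle), then the coloring extends. These are the lemmas that turn low degree into extendability.

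Next I would establish the reducible configurations. The key claims are of the form: $G$ contains no $2$-vertex adjacent to another $2$-vertex; more generally, a $2$-vertex's two neighbors cannot both have small degree; and (crucially for $\mad<3$) a vertex of degree $3$ cannot be adjacent to two or three $2$-vertices, and a $2$-vertex cannot be adjacent to a $3$-vertex that is itself adjacent to further $2$-vertices. Each such claim is proved by deleting the offending edge (or a $2$-vertex), invoking minimality to obtain an acyclic edge $k$-coloring of the smaller graph, and then recoloring: one argues that the number of colors genuinely unavailable for the deleted edge is less than $k$, using the critical-path analysis above, and sometimes first performing a local exchange along a bichromatic path to free up a color. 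The threshold $\mad<3$ is what makes this possible with only $\Delta+1$ colors rather than $\Delta+2$: the budget is tight, so each reducible configuration needs a genuinely sharp count, and the arguments for a $3$-vertex with several $2$-neighbours will be the delicate ones.

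Finally, the discharging phase. Assign to each vertex $v$ the charge $d(v)-3$; since $\mad(G)<3$, we have $\sum_{v}(d(v)-3)<0$. I would then redistribute charge by a single rule: every $3^+$-vertex sends $\tfrac12$ (or an appropriate fraction) to each adjacent $2$-vertex. A $2$-vertex starts with charge $-1$ and, by the reducibility results, receives enough from its two neighbours to reach a nonnegative final charge; a $3$-vertex starts with $0$, and the reducibility bound on how many $2$-neighbours it may have guarantees its final charge is still $\geq 0$; vertices of degree $\geq 4$ clearly end nonnegative. Hence $\sum_v \mu^*(v)\geq 0$, contradicting $\sum_v \mu^*(v)=\sum_v(d(v)-3)<0$. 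This contradiction proves the theorem.

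The step I expect to be the main obstacle is verifying the reducibility of a $3$-vertex adjacent to two $2$-vertices (and the companion configuration around a $2$-vertex whose neighbour is a $3$-vertex with other $2$-neighbours): with only $\Delta+1$ colors the slack is minimal, so one must chase several cases of critical bichromatic paths, possibly after a preliminary Kempe-type swap, to show a valid color for the deleted edge always remains. Getting the discharging rule calibrated so that these $3$-vertices (which carry zero initial charge) break even is the other place where the argument is tight, but once the reducible configurations are pinned down the discharging itself is routine.
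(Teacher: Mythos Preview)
Your overall architecture---minimal counterexample, initial charge $d(v)-3$, send $\tfrac12$ to each adjacent $2$-vertex---is exactly the paper's. But the reducible configurations you aim for are too weak, and this is a genuine gap, not a mere omission.

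You plan to show that a $3$-vertex has at most one $2$-neighbour, and you flag the case of a $3$-vertex with two $2$-neighbours as the hard step. With only that, discharging fails: a $3$-vertex with a single $2$-neighbour starts at charge $0$, sends $\tfrac12$, and ends at $-\tfrac12$. What is actually needed, and what the paper proves (Lemma~\ref{neighbor of 2-vertex} with $k=\Delta+1$), is that \emph{every} neighbour of a $2$-vertex has degree at least $4$. Thus $3$-vertices never send charge at all and trivially finish at $0$. The proof of this lemma is a short critical-path argument (delete the edge from the $2$-vertex to its small neighbour, then show every candidate colour is blocked by a bichromatic path through the other neighbour, forcing that neighbour to have too large degree; a final swap gives the contradiction). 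So the step you anticipated as hardest is in fact not the right target: the correct lemma is stronger but no harder.

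Second, your claim that ``vertices of degree $\geq 4$ clearly end nonnegative'' is not clear. A $4$-vertex with four $2$-neighbours would end at $4-3-4\cdot\tfrac12=-1$. The paper uses Lemma~\ref{lem2vertex} (Corollary~\ref{2-vertex-+1}): any vertex $v$ with a $2$-neighbour satisfies $n_2(v)\le d(v)-2$, whence for $d(v)\ge 4$ one gets $\omega'(v)\ge d(v)-3-\tfrac12(d(v)-2)=\tfrac{d(v)}{2}-2\ge 0$. You need to state and prove this bound; it is again a short extension argument, but it cannot be waved away.
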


By an  application of Euler's formula, it is easy to see that
every planar graph $G$ with girth $g(G)$ satisfies
$\mad(G)<\frac{2g(G)}{g(G)-2}$. Thus, we have the following corollaries.

\begin{corollary} \label{corollary1}
Every  triangle-free planar graph is acyclic edge $(\Delta(G)+2)$-colorable.
\end{corollary}

\begin{corollary} \label{corollary2}
Every  planar graph with $g(G)\ge 6$ is acyclic edge $(\Delta(G)+1)$-colorable.
\end{corollary}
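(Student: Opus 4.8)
The plan is to prove Theorem~\ref{mad<3} by a standard discharging argument applied to a minimum counterexample. Suppose $G$ is a graph with $\mad(G)<3$ that is not acyclically edge $(\Delta+1)$-colorable, and among all such graphs choose one minimizing $|V(G)|+|E(G)|$; call it $G$. Every proper subgraph of $G$ has an acyclic edge coloring with $\Delta(G)+1$ colors, and by minimality $G$ is connected and has $\delta(G)\ge 2$. The first block of work is to establish a list of \emph{reducible configurations}: small local structures that cannot appear in $G$. The basic one is that $G$ has no two adjacent $2$-vertices (and more generally no $2$-vertex adjacent to a $2$-vertex whose other neighbor has low degree), and more refined ones controlling how many $2$-vertices can hang off a $3$-vertex, and how $3$-vertices can cluster. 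Each reducibility claim is proved by deleting or contracting the offending edge/vertex, invoking minimality to colour what remains, and then extending the colouring: one counts the colours forbidden at the uncoloured edge(s) by properness, and then argues that any choice creating a bichromatic cycle can be locally repaired (the usual ``switch along a \path'' technique). This extension/repair analysis is the technical heart and the step I expect to be the main obstacle, since for the tight bound $\Delta+1$ the colour budget is very slim and one must juggle several potential bichromatic cycles simultaneously.

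With the reducible configurations in hand, the second block is the discharging phase. Assign to each vertex $v$ the charge $\mathrm{ch}(v)=d(v)$. Since $\mad(G)<3$ we have $\sum_{v}d(v)=2|E(G)|<3|V(G)|$, so $\sum_v(\mathrm{ch}(v)-3)<0$; equivalently, after shifting, the total excess over $3$ is negative. I would instead use $\mathrm{ch}(v)=d(v)-3$ so that $\sum_v \mathrm{ch}(v)<0$. Only $2$-vertices have negative charge ($-1$ each); $3$-vertices have charge $0$ and $k$-vertices with $k\ge 4$ have positive charge $k-3$. The single discharging rule is then: every $2$-vertex receives $\tfrac12$ from each of its two neighbours. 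After discharging, a $2$-vertex has charge $-1+2\cdot\tfrac12=0$. A $3$-vertex, by the reducible configurations, has no $2$-vertex neighbour (or at most a controlled number), so it gives away nothing, or what it gives away is covered — here one must check the ``no $3$-vertex adjacent to a $2$-vertex'' type statements actually hold, otherwise a secondary rule sending charge from $4^+$-vertices through $3$-vertices is needed. A $k$-vertex with $k\ge 4$ sends at most $\tfrac12$ to each neighbour, ending with charge at least $k-3-\tfrac{k}{2}=\tfrac{k}{2}-3\ge -1$, which is not automatically nonnegative, so in fact the reducibility must also bound $n_2(v)$ for large $v$ — i.e.\ a $k$-vertex cannot have too many $2$-neighbours. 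Getting the rules and the reducible list to mesh so that \emph{every} final charge is $\ge 0$ is the delicate bookkeeping; the contradiction with $\sum_v\mathrm{ch}(v)<0$ then finishes it.

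For Theorem~\ref{mad<4} the same framework applies with $\mathrm{ch}(v)=d(v)-4$, a richer (and longer) list of reducible configurations involving $2$- and $3$-vertices and the edges among them, and correspondingly more discharging rules; I would expect the paper to prove that case first (it is Theorem~\ref{mad<4}) as it subsumes more structure, and then either repeat or specialize the argument for $\mad(G)<3$. The corollaries are immediate: a triangle-free planar graph has $g(G)\ge 4$, hence $\mad(G)<\frac{2\cdot4}{4-2}=4$, so Theorem~\ref{mad<4} gives $\chi'_a(G)\le\Delta(G)+2$ (Corollary~\ref{corollary1}); and $g(G)\ge 6$ gives $\mad(G)<\frac{12}{4}=3$, so Theorem~\ref{mad<3} applies (Corollary~\ref{corollary2}). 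The one genuinely new idea needed beyond Basavaraju--Chandran's $\Delta+3$ result is squeezing two extra colours out of the extension arguments; I anticipate this requires, at reducible configurations, choosing the colour of a deleted edge not greedily but so as to preserve a free colour at a nearby vertex, and being careful that bichromatic $4$-cycles (short cycles are the dangerous ones when girth is only $3$ or $4$) are excluded by the colour count rather than by a switching move.
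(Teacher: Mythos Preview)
Your derivation of the corollary from Theorem~\ref{mad<3} via the Euler-formula bound $\mad(G)<\frac{2g}{g-2}$ is exactly what the paper does, and your outline for Theorem~\ref{mad<3} matches the paper's proof: charge $d(v)-3$, single rule sending $\tfrac12$ from each neighbour to a $2$-vertex, with the two structural facts you anticipated---namely that $2$-vertices have only $4^+$-neighbours (so $3$-vertices never pay; this is Lemma~\ref{neighbor of 2-vertex}) and that any vertex $v$ adjacent to a $2$-vertex has $n_2(v)\le d(v)-2$ (from Lemma~\ref{lem2vertex}, quoted from earlier work)---which makes the final charge of a $k$-vertex at least $k-3-\tfrac{k-2}{2}=\tfrac{k}{2}-2\ge 0$ for $k\ge 4$, so no secondary rule is needed.
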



\section{Properties of $k$-critical graph}

At first, we introduce some notations and definitions used in \cite{basaSIAM}.
Let $H$ be a subgraph of $G$. Then an acyclic  edge coloring $\phi$
of $H$ is called a  $partial$ $acyclic$ $edge$ $coloring$ of $G$.
Note that $H$ can be $G$ itself. Let $\phi: E(G)\rightarrow [k]$ be a partial  edge
$k$-coloring of $G$. For any vertex $v\in V(G)$, let
$F_v(\phi)=\{\phi(uv)|u\in{N_G(v)}\}$ and $C_v(\phi)=[k]-F_v(\phi)$.
For an edge $uv\in E(G)$, we say the color $\phi(uv)$ $appears$ on
the vertex $v$ and define $S_{uv}(\phi)=F_v(\phi)-\{\phi(uv)\}$.
Note that $S_{uv}(\phi)$ need not be the same as $S_{vu}(\phi)$.

Let $\alpha,\beta$ be two colors.
An $(\alpha,\beta)$-$maximal$ $bichromatic$ $path$ with respect to a partial edge coloring of $G$ is maximal path  consisting of edges that are colored $\alpha$ and $\beta$ alternatingly.  An $(\alpha,\beta,u,v)$-$maximal$ $bichromatic$ $path$ is an $(\alpha,\beta)$-maximal bichromatic path which starts at the vertex $u$ with an edge colored $\alpha$ and ends at $v$. An $(\alpha,\beta,u,v)$-maximal bichromatic path which ends at the vertex $v$ via an edge colored $\alpha$ is called an $(\alpha,\beta,u,v)$-$critical$ $path$.

A graph $G$ is called an $acyclically$ $edge$ $k$-$critical$ $graph$
if $\chi'_a(G)>k$ and any proper subgraph of $G$ is acyclically edge
$k$-colorable. Obviously, if $G$ is an acyclically edge $k$-critical
graph with $k>\Delta(G)$, then $\Delta(G)\ge 3$. The following facts are obvious.


\begin{fact} \label{one-bichromatic-path}
Given a pair of color $\alpha$ and $\beta$ of a proper edge coloring $\phi$ of $G$, there is at most one  $(\alpha,\beta)$-maximal bichromatic path containing a particular vertex $v$, with respect to $\phi$.
\end{fact}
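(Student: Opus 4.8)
The statement is essentially a consequence of properness, so the plan is to argue that a bichromatic path is rigidly determined once a vertex and an initial edge are fixed. First I would form the subgraph $H\subseteq G$ consisting of exactly those edges of $G$ colored $\alpha$ or $\beta$ under $\phi$. Since $\phi$ is a proper edge coloring, each vertex of $G$ is incident with at most one edge colored $\alpha$ and at most one edge colored $\beta$, so $\Delta(H)\le 2$; hence every connected component of $H$ is either a path or an (even) cycle whose edges alternate between $\alpha$ and $\beta$. Any $(\alpha,\beta)$-maximal bichromatic path containing $v$ consists entirely of edges of $H$ and therefore lies inside the unique component $C$ of $H$ that contains $v$.

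Next I would distinguish the two possible shapes of $C$. If $C$ is a path, then among all sub-paths of $C$ the only one that admits no proper extension within $C$ is $C$ itself, so $C$ is the unique maximal bichromatic path containing $v$. If $C$ were a cycle, no maximal bichromatic path would be needed for our purposes: in the acyclic setting in which this fact is applied throughout the paper a bichromatic cycle is forbidden, so $H$ is in fact a forest and $C$ is always a path. Either way there is at most one maximal bichromatic path through $v$. I would also record, for use in the sequel, that along this unique path the edge incident with $v$ must be the $\alpha$-edge or the $\beta$-edge at $v$, and that each subsequent edge is forced by properness.

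Since the argument is essentially definitional, there is no genuine obstacle; the only point requiring care is the convention governing ``maximality'' on a cyclic component, which is precisely why I would make the observation that $H$ is a forest whenever $\phi$ is acyclic before concluding uniqueness.
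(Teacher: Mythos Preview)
Your argument is correct and complete; the paper itself offers no proof at all, merely labelling the statement ``obvious,'' so your write-up supplies the details the paper omits. Your decomposition via the subgraph $H$ with $\Delta(H)\le 2$ is exactly the natural (and only) way to see this.

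One remark on the point you yourself flag: the fact as literally stated in the paper refers to an arbitrary \emph{proper} edge coloring, under which a cyclic component of $H$ is in principle possible, and on a cycle there are indeed several maximal sub-paths through $v$. You resolve this by observing that every application in the paper is to a partial \emph{acyclic} coloring, so $H$ is a forest and the issue never arises. That is the right resolution in context; if you want the statement to stand on its own for arbitrary proper colorings, you would need to adopt the alternative reading of ``$(\alpha,\beta)$-maximal bichromatic path'' as a maximal connected $\{\alpha,\beta\}$-colored subgraph that \emph{is} a path (so that a cyclic component simply contains no such object). Either convention makes the fact true, and you have correctly identified that this is the only place where any care is needed.
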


\begin{fact} \label{uv-edge-degree}
Let $G$ be an acyclically edge $k$-critical graph, and $uv$ be an edge of $G$. Then for any acyclically edge $k$-coloring $\phi$ of $G-uv$, if $F_u(\phi)\cap F_v(\phi)=\emptyset$, then $d(u)+d(v)\ge k+2$. If $|F_u(\phi)\cap F_v(\phi)|=t$, say $\phi(uu_i)=\phi(vv_i)$ for $i=1,2,..,t$, then $\sum\limits_{i=1}^{t}d(v_i)+d(u)+d(v)\ge k+t+2$ and $\sum\limits_{i=1}^{t}d(u_i)+d(u)+d(v)\ge k+t+2$.
\end{fact}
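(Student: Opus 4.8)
The plan is to exploit the $k$-criticality of $G$. Since $G-uv$ is a proper subgraph of $G$, the hypothesis gives that $\phi$ is an acyclic edge $k$-coloring of $G-uv$, and by criticality it does not extend to $G$; that is, $\phi(uv)$ cannot be set to any color so that the result stays an acyclic edge $k$-coloring. Write $A=[k]\setminus(F_u(\phi)\cup F_v(\phi))$ for the set of colors missing at both $u$ and $v$. Because $\phi$ is proper and colors every edge of $G-uv$, we have $|F_u(\phi)|=d(u)-1$, $|F_v(\phi)|=d(v)-1$, and hence $|A|=k-d(u)-d(v)+2+t$. For each $c\in A$, setting $\phi(uv)=c$ is still a proper edge coloring, so it must fail to be acyclic, and, since $\phi$ was acyclic on $G-uv$, it must create a bichromatic cycle through the edge $uv$.

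Next I would pin down the structure of that forced cycle. A bichromatic cycle through $uv$ (colored $c$) is a $(c,\beta)$-cycle, and each of the two cycle-edges incident with $uv$ must be colored $\beta$; in particular $\beta\in F_u(\phi)\cap F_v(\phi)$. In the first case $F_u(\phi)\cap F_v(\phi)=\emptyset$, so no valid $\beta$ exists and $A$ must be empty, giving at once $d(u)+d(v)\ge k+2$. In the second case $\beta=c_i$ for some $i\in\{1,\dots,t\}$; since the only edge of color $c_i$ at $u$ is $uu_i$ and the only one at $v$ is $vv_i$, the cycle has the form $u,v,v_i,\dots,u_i,u$ with $v_i,u_i\notin\{u,v\}$, and properness forbids it from being a triangle. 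Consequently the cycle-edge at $v_i$ other than $vv_i$ is colored $c$, so $c\in F_{v_i}(\phi)$; symmetrically $c\in F_{u_i}(\phi)$.

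Finally comes the counting. For each $c\in A$ fix an index $i=i(c)\in\{1,\dots,t\}$ as above. Every $c\in A$ with $i(c)=i$ lies in $F_{v_i}(\phi)$ but not in $F_v(\phi)\ni c_i$, hence in $F_{v_i}(\phi)\setminus\{c_i\}$, a set of size $d(v_i)-1$; summing over $i$ gives $|A|\le\sum_{i=1}^{t}(d(v_i)-1)$. Combining with $|A|=k-d(u)-d(v)+2+t$ yields $\sum_{i=1}^{t}d(v_i)+d(u)+d(v)\ge k+t+2$ (indeed $\ge k+2t+2$), and the identical computation with $u_i$ in place of $v_i$ gives the other inequality. \emph{The main obstacle} is the middle step: one must argue carefully that the forced bichromatic cycle really is of the form $u,v,v_i,\dots,u_i,u$ and does not degenerate, so that the color $c$ is genuinely forced to appear at $v_i$ (and at $u_i$). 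Fact~\ref{one-bichromatic-path} is precisely what ensures uniqueness of the relevant maximal bichromatic path, and hence that its endpoints at $u$ and $v$ are exactly $u_i$ and $v_i$.
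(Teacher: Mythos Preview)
Your argument is correct. The paper itself does not supply a proof of this fact; it is introduced with the sentence ``The following facts are obvious,'' so there is nothing to compare against beyond saying that your counting argument is exactly the standard justification one has in mind when calling this obvious. Your observation that the argument actually yields the stronger bound $\sum_{i=1}^{t}d(v_i)+d(u)+d(v)\ge k+2t+2$ is also correct, though the paper only states and uses the weaker form. One small quibble: the reason the bichromatic $(c,c_i)$-cycle cannot be a triangle $u,v,v_i,u$ is indeed properness of $\phi$ at $v_i$ (the two edges $vv_i$ and $uv_i=uu_i$ would both carry color $c_i$), but one could equally argue directly that the edge returning to $u$ must carry $c_i$, forcing it to be $uu_i$, while $c\notin F_u(\phi)$ rules out any edge of color $c$ at $u$ other than $uv$ itself. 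Either way the conclusion stands.
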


In \cite{hou2011}, Hou et al.  considered the properties of
acyclically edge $k$-critical graphs and got the following lemmas.

\begin{lemma} \cite{hou2011}\label{2-connected}
Any acyclically edge $k$-critical graph  is $2$-connected.
\end{lemma}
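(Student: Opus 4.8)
The plan is to prove the statement by contradiction: assume $G$ is acyclically edge $k$-critical and show it must be $2$-connected. First I would eliminate disconnectedness. An acyclic edge coloring of $G$ restricts to an acyclic edge coloring of each component and conversely, so $\chi'_a(G)=\max_{C}\chi'_a(C)$ over the components $C$ of $G$; were $G$ disconnected, $\chi'_a(G)>k$ would yield a component $C$ with $\chi'_a(C)>k$, and $C\subsetneq G$ would contradict $k$-criticality. So $G$ is connected, and if it were not $2$-connected it would have a cut vertex $v$ (the degenerate cases $|V(G)|\le 2$ cannot arise for a $k$-critical graph with $k\ge\Delta(G)$).

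Next I would decompose $G$ at $v$. Choose a component $C_1$ of $G-v$ and put $G_1=G[V(C_1)\cup\{v\}]$ and $G_2=G[V(G)\setminus V(C_1)]$. Then $G_1,G_2$ are induced subgraphs of $G$ with $V(G_1)\cap V(G_2)=\{v\}$, the sets $E(G_1)$ and $E(G_2)$ partition $E(G)$, each $G_i$ is a proper subgraph of $G$, and $d_{G_1}(v)\ge 1$, $d_{G_2}(v)\ge 1$. By $k$-criticality choose acyclic edge $k$-colorings $\phi_1$ of $G_1$ and $\phi_2$ of $G_2$. Since $d_{G_1}(v)+d_{G_2}(v)=d_G(v)\le\Delta(G)\le k$, we have $|[k]\setminus F_v(\phi_1)|=k-d_{G_1}(v)\ge d_{G_2}(v)=|F_v(\phi_2)|$, so some permutation $\sigma$ of $[k]$ satisfies $\sigma(F_v(\phi_2))\cap F_v(\phi_1)=\emptyset$. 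Now define $\phi$ on $E(G)$ by $\phi=\phi_1$ on $E(G_1)$ and $\phi=\sigma\circ\phi_2$ on $E(G_2)$.

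It remains to check that $\phi$ is an acyclic edge $k$-coloring of $G$, contradicting $\chi'_a(G)>k$. Properness: restricted to $G_1$ it is $\phi_1$, restricted to $G_2$ it is a recoloring of $\phi_2$, and the only vertex meeting edges of both parts is $v$, where the two color sets are disjoint by the choice of $\sigma$. Acyclicity: since $V(G_1)\cap V(G_2)=\{v\}$, every cycle of $G$ lies entirely in $G_1$ or entirely in $G_2$ --- a cycle meeting both $V(G_1)\setminus\{v\}$ and $V(G_2)\setminus\{v\}$ would contain two internally disjoint paths between these parts, each forced to pass through the unique common vertex $v$, which is impossible. A bichromatic cycle of $\phi$ lying in $G_1$ would be a bichromatic cycle of $\phi_1$, and one lying in $G_2$ would, after applying $\sigma^{-1}$ to its two colors, be a bichromatic cycle of $\phi_2$; both contradict the choice of $\phi_1,\phi_2$.

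I expect the only genuine obstacle to be the gluing at the cut vertex, namely recoloring $\phi_2$ so that its palette at $v$ avoids that of $\phi_1$; this needs at least $d_G(v)$ colors, which is available because $k\ge\Delta(G)$ in every setting where the lemma is applied (here $k=\Delta(G)+1$ or $\Delta(G)+2$). The remaining points --- that $G_1$ and $G_2$ are proper subgraphs with $E(G_1),E(G_2)$ partitioning $E(G)$, and that a cycle cannot straddle a cut vertex --- are routine.
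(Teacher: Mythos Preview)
The paper does not supply a proof of this lemma; it is quoted from \cite{hou2011} and used as a black box, so there is no argument in the paper to compare yours against.  Your proof is the standard one and is correct: reduce to the connected case componentwise, then at a cut vertex split into two proper induced subgraphs, color each by criticality, permute the palette of one side so the two color sets at the cut vertex are disjoint, and observe that no cycle can pass through a cut vertex.  You are also right to flag the one substantive hypothesis your gluing step uses, namely $k\ge\Delta(G)$ (so that $|[k]\setminus F_v(\phi_1)|\ge d_{G_2}(v)$); without it the statement is actually false (e.g.\ the path $P_3$ is acyclically edge $1$-critical but has a cut vertex), and the paper only ever invokes the lemma with $k=\Delta(G)+1$ or $k=\Delta(G)+2$, where your argument applies.
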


\begin{lemma} \cite{hou2011}\label{lem2vertex}
Let $G$ be an acyclically edge $k$-critical graph with $k\le 2\Delta(G)-2$ and $v$ be a vertex of
$G$  adjacent to a $2$-vertex. Then $v$ is
adjacent to at least $k-\Delta(G)+1$ vertices of degree at least $k-\Delta(G)+2$.
\end{lemma}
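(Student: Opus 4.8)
The plan is to exploit criticality by deleting an edge incident to the $2$-vertex and analyzing why the resulting coloring cannot be extended. Write $\Delta=\Delta(G)$, and assume $k\ge\Delta+1$, since otherwise $k-\Delta+1\le 1$ and the $2$-vertex adjacent to $v$ is itself a neighbor of the required degree (or the conclusion is vacuous). Let $u$ be a $2$-vertex adjacent to $v$, with $N_G(u)=\{v,w\}$, and let $\phi$ be an acyclic edge $k$-coloring of $G-uv$, which exists by criticality. In $G-uv$ the vertex $u$ has degree $1$, so $uw$ is its only edge; hence any bichromatic cycle created by assigning a color $c$ to $uv$ must pass through $u$, therefore through $uw$, and so is a $(c,\phi(uw))$-bichromatic cycle of the form $u,v,\dots,w,u$.

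The first step is the observation that $\phi(uw)\in F_v(\phi)$. Indeed, if not, then any color $c\in[k]\setminus(F_v(\phi)\cup\{\phi(uw)\})$ --- a nonempty set since $|F_v(\phi)|=d(v)-1\le\Delta-1<k-1$ --- is proper at $u$ and at $v$ and creates no bichromatic cycle, because such a cycle would need a $\phi(uw)$-colored edge at $v$ and there is none; this contradicts criticality. The same argument applies verbatim to every acyclic edge $k$-coloring of $G-uv$, so I record it as a sub-claim. Next, fixing such a coloring $\psi$, writing $\beta=\psi(uw)$ and letting $vz$ be the edge at $v$ with $\psi(vz)=\beta$ (note $z\ne w$, else $w$ would meet two $\beta$-colored edges), I want $d(z)\ge k-d(v)+2$. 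Since $\psi$ does not extend to $uv$, every color is forbidden for $uv$; apart from the $d(v)-1$ colors of $F_v(\psi)$, a color $c\notin F_v(\psi)$ is forbidden only because assigning it to $uv$ creates a $(c,\beta)$-cycle $u,v,z,\dots,w,u$, which forces $z$ to be incident to a $c$-colored edge, i.e.\ $c\in S_{vz}(\psi)$. Hence $[k]\setminus F_v(\psi)\subseteq S_{vz}(\psi)$, and comparing sizes gives $k-(d(v)-1)\le|S_{vz}(\psi)|=d(z)-1$, as desired.

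Applied to $\phi$ itself this already yields one neighbor of $v$ of degree at least $k-d(v)+2\ge k-\Delta+2$, namely the endpoint of the $\phi(uw)$-colored edge at $v$; but we need $k-\Delta+1$ of them. To obtain the rest I would use again that $u$ has degree $1$ in $G-uv$: recoloring the edge $uw$ with any color $\beta'$ not appearing on the other edges at $w$ again gives an acyclic edge $k$-coloring of $G-uv$, since properness is guaranteed by the choice of $\beta'$ and a recolored edge at a degree-$1$ vertex cannot lie on a bichromatic cycle. There are exactly $k-(d(w)-1)=k-d(w)+1$ admissible values of $\beta'$; for each, the sub-claim forces $\beta'\in F_v(\phi)$, and the degree bound forces the $\beta'$-colored edge at $v$ to end at a vertex of degree at least $k-\Delta+2$. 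Distinct colors occupy distinct edges at $v$, hence distinct neighbors, so $v$ has at least $k-d(w)+1\ge k-\Delta+1$ neighbors of degree at least $k-\Delta+2$.

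The cycle bookkeeping is routine; the step I expect to be the main obstacle to get exactly right is the amplification --- recognizing that the degree-$1$ status of $u$ in $G-uv$ lets one cycle the color of $uw$ through all $k-d(w)+1$ admissible values without ever destroying acyclicity, each value pointing at a (provably distinct) high-degree neighbor of $v$ --- together with keeping the constant at $k-d(v)+2$ rather than the weaker $k-d(v)+1$ that the generic estimate behind Fact~\ref{uv-edge-degree} would produce; this improvement is precisely what the hypothesis $d(u)=2$ buys.
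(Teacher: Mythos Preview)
The paper does not supply a proof of this lemma; it is quoted from \cite{hou2011}. Your argument is correct and is the natural one: delete $uv$, use that $u$ has degree~$1$ in $G-uv$ so any obstructing bichromatic cycle is a $(\phi(uw),c)$-cycle through the unique $\phi(uw)$-edge at $v$, and then amplify by recoloring $uw$ through all $k-d(w)+1$ colors missing at $w$. Each such recoloring remains acyclic (a pendant edge lies on no cycle) and points, via the sub-claim, at a distinct neighbor of $v$ of degree at least $k-d(v)+2\ge k-\Delta+2$; since $d(w)\le\Delta$ this gives the required $k-\Delta+1$ neighbors. The only cosmetic remark is that you could note explicitly that the containment $[k]\setminus S_{uw}(\phi)\subseteq F_v(\phi)$ also forces $d(v)\ge k-\Delta+2$, which is consistent with (indeed recovers part of) Fact~\ref{uv-edge-degree}.
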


By above lemma, we have the following corollaries.

\begin{corollary}\label{2-vertex-+1}
Let $v$ be a vertex of an acyclically edge $(\Delta(G)+1)$-critical
graph $G$. Then $n_2(v)\leq \Delta(G)-2$.
\end{corollary}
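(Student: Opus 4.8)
The plan is to derive this directly from Lemma \ref{lem2vertex} with $k=\Delta(G)+1$, after a short case analysis on whether $v$ has a $2$-neighbor at all. First I would record that since $G$ is acyclically edge $(\Delta(G)+1)$-critical and $\Delta(G)+1>\Delta(G)$, the remark preceding Fact \ref{one-bichromatic-path} gives $\Delta(G)\ge 3$; consequently $k=\Delta(G)+1\le 2\Delta(G)-2$, so the hypothesis of Lemma \ref{lem2vertex} is satisfied.

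Next I would split into two cases. If $v$ is adjacent to no $2$-vertex, then $n_2(v)=0\le\Delta(G)-2$ because $\Delta(G)\ge 3$, and we are done. Otherwise $v$ is adjacent to some $2$-vertex, so Lemma \ref{lem2vertex} applies: $v$ is adjacent to at least $k-\Delta(G)+1=(\Delta(G)+1)-\Delta(G)+1=2$ vertices of degree at least $k-\Delta(G)+2=3$. In particular at least two neighbors of $v$ are $3^+$-vertices, hence not $2$-vertices, so among the $d(v)$ neighbors of $v$ at most $d(v)-2$ are $2$-vertices. Since $d(v)\le\Delta(G)$, this yields $n_2(v)\le d(v)-2\le\Delta(G)-2$, completing the proof.

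There is essentially no obstacle here: the only things to be careful about are (i) checking that the critical hypothesis forces $\Delta(G)\ge 3$ so that $k\le 2\Delta(G)-2$ holds and Lemma \ref{lem2vertex} is applicable, and (ii) handling the degenerate case in which $v$ has no $2$-neighbor separately, since Lemma \ref{lem2vertex} is only stated for vertices that do have one. Both are immediate, so the corollary follows in a couple of lines.
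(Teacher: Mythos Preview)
Your proposal is correct and is exactly the intended derivation: the paper does not spell out a proof but simply states that the corollary follows from Lemma~\ref{lem2vertex}, and your argument fills in precisely the natural details (checking $\Delta(G)\ge 3$ so that $k=\Delta(G)+1\le 2\Delta(G)-2$, and then applying the lemma to get two $3^+$-neighbors). There is nothing to add or change.
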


\begin{corollary}\label{2and3-vertex-+2}
Let $v$ be a vertex of an acyclically edge $(\Delta(G)+2)$-critical graph  $G$. If $n_2(v)\neq 0$, then $n_2(v)+n_3(v)\leq \Delta(G)-3$.
\end{corollary}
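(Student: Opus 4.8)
The plan is to obtain this as an immediate specialization of Lemma~\ref{lem2vertex} with $k=\Delta(G)+2$, once the numerical hypothesis of that lemma has been checked. The excerpt already records that an acyclically edge $k$-critical graph with $k>\Delta(G)$ has $\Delta(G)\ge 3$, so the only value of $\Delta(G)$ for which the hypothesis $k\le 2\Delta(G)-2$ could fail is $\Delta(G)=3$. But for $\Delta(G)=3$ one would need an acyclically edge $5$-critical graph, i.e. a graph with $\chi'_a>5=\Delta(G)+2$, and no such graph exists since Conjecture~\ref{conjecture} is known to hold for $\Delta(G)\le 3$ (as noted in the introduction); hence the statement holds vacuously in that case. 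So I may assume $\Delta(G)\ge 4$, and then $k=\Delta(G)+2\le 2\Delta(G)-2$, so Lemma~\ref{lem2vertex} applies.

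Now suppose $n_2(v)\ne 0$, so $v$ is adjacent to a $2$-vertex. Applying Lemma~\ref{lem2vertex} with $k=\Delta(G)+2$ yields that $v$ is adjacent to at least $k-\Delta(G)+1=3$ vertices of degree at least $k-\Delta(G)+2=4$. None of these neighbors lies in $N_2(v)$ or $N_3(v)$, so
$$n_2(v)+n_3(v)\le d(v)-3\le \Delta(G)-3,$$
which is exactly the claimed inequality.

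There is essentially no obstacle here: the corollary is a direct substitution into Lemma~\ref{lem2vertex}, the parameters $k-\Delta(G)+1=3$ and $k-\Delta(G)+2=4$ being read straight off. The only point that needs care is verifying $k\le 2\Delta(G)-2$ before invoking the lemma, which is why the short preliminary step of dismissing $\Delta(G)=3$ is required.
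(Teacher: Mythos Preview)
Your proof is correct and follows exactly the intended route: the paper simply presents this as an immediate corollary of Lemma~\ref{lem2vertex} (``By above lemma, we have the following corollaries''), and your write-up is just the explicit substitution $k=\Delta(G)+2$ together with the observation that $v$ then has at least three neighbors of degree $\ge 4$. Your extra care in verifying the hypothesis $k\le 2\Delta(G)-2$ by disposing of the case $\Delta(G)=3$ is a nice touch that the paper leaves implicit.
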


Now we consider the neighbors of 2-vertices in  acyclically edge critical graphs.

\begin{lemma}  \label{neighbor of 2-vertex}
Let $v$ be a $2$-vertex of   an acyclically edge $k$-critical graph $G$ with
$k>\Delta(G)$. Then the neighbors of
$v$ are $(k-\Delta(G)+3)^+$-vertices.
\end{lemma}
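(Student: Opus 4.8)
The plan is to argue by contradiction, building on Fact \ref{uv-edge-degree} and the critical-graph machinery. Let $v$ be a $2$-vertex of an acyclically edge $k$-critical graph $G$ with $k>\Delta(G)$, and suppose one neighbor $u$ of $v$ satisfies $d(u)\le k-\Delta(G)+2$. Denote by $w$ the other neighbor of $v$, so $d(v)=2$ and $N(v)=\{u,w\}$. By criticality, $G-uv$ admits an acyclic edge $k$-coloring $\phi$; I want to extend $\phi$ to $G$ by recoloring $uv$ (and possibly $vw$), which would contradict $\chi'_a(G)>k$. Since $d(u)+d(v)-1 = d(u)+1\le k-\Delta(G)+3 \le k$ (using $\Delta\ge 3$, which holds as noted in the excerpt), there are colors available at $u$ in $G-uv$; more precisely $|C_u(\phi)|\ge k-(d(u)-1)\ge \Delta(G)-1\ge 2$ and $|C_v(\phi)|\ge k-1\ge \Delta(G)\ge 3$, so there is plenty of slack at both endpoints.

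The key steps, in order, are: first, choose a color $\gamma$ for $uv$ that is proper, i.e. $\gamma\in C_u(\phi)\cap C_v(\phi)$; such a $\gamma$ exists whenever $C_u(\phi)\cap C_v(\phi)\neq\emptyset$, and if this intersection is empty then $d(u)+d(v)\ge k+2$ by the first part of Fact \ref{uv-edge-degree}, forcing $d(u)\ge k$, contradicting $d(u)\le k-\Delta(G)+2\le k-1$. So fix such a $\gamma$; the only obstruction to acyclicity is the possible creation of a bichromatic cycle through $uv$, which must be a $(\gamma,\delta)$-bichromatic cycle for some $\delta\in F_u(\phi)\cap S_{vw}(\phi)$ — in particular $\delta$ is the color on $vw$, since $v$ has degree $2$ and any bichromatic cycle through $v$ uses both edges at $v$. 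Thus a bad $\gamma$ is one for which there is a $(\gamma,\phi(vw),u,w)$-critical path. Second, I count the bad choices: by Fact \ref{one-bichromatic-path} each color $\delta$ yields at most one such path, but here $\delta=\phi(vw)$ is fixed, so there is at most one bad color $\gamma$, namely the color (if any) adjacent to $u$ that starts the critical path to $w$. Hence if $|C_u(\phi)\cap C_v(\phi)|\ge 2$ we can pick a good $\gamma$ and finish.

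It remains to handle the case $|C_u(\phi)\cap C_v(\phi)|=1$ with that single common color being bad. Here I would use the second recoloring move: since $|C_v(\phi)|\ge 3$, recolor the edge $vw$ with a color $\beta\in C_v(\phi)$ different from $\phi(vw)$ and, if necessary, chosen outside $F_w(\phi)$ — possible because $|C_v(\phi)|\ge 3$ while only the old color $\phi(vw)$ and colors in $F_w(\phi)$ need to be avoided, and a more careful count (using $d(w)\le\Delta(G)$ and $k\ge\Delta(G)+1$) shows a valid $\beta$ exists, or else an application of Fact \ref{uv-edge-degree} to the edge $vw$ again forces $d(w)$ too large. After recoloring $vw$ with $\beta$, the forbidden color for $uv$ changes, and the set of colors available at $v$ shifts, so the previously unique bad color need not remain bad; one re-runs the counting argument of the previous paragraph with $\phi(vw)$ replaced by $\beta$. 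The main obstacle I anticipate is precisely this interplay — ensuring that recoloring $vw$ does not itself create a bichromatic cycle (through $w$) while simultaneously freeing up a safe color for $uv$ — and carrying out the bookkeeping of which colors at $w$ can be "blamed" for critical paths; this is where the hypothesis $d(u)\le k-\Delta(G)+2$ (equivalently the contrapositive bound $k-\Delta(G)+3\le d(u)$ in the statement) gets used to guarantee enough room, and I expect the degree inequalities from Fact \ref{uv-edge-degree} applied to both $uv$ and $vw$ to close all remaining subcases.
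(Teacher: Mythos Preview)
Your counting step contains a genuine error. You claim that ``by Fact \ref{one-bichromatic-path} \ldots\ there is at most one bad color $\gamma$,'' but Fact \ref{one-bichromatic-path} gives uniqueness only for a \emph{fixed pair} of colors $(\alpha,\beta)$. Here $\delta=\phi(vw)$ is fixed while $\gamma$ ranges over $C_u(\phi)\cap C_v(\phi)$; for each such $\gamma$ you get a different pair $(\gamma,\delta)$, and hence a different maximal bichromatic path starting at $u$ along the edge coloured $\delta$. Nothing prevents every one of these paths from terminating at $w$. Indeed this is exactly what happens: writing $N(u)=\{v,u_1,\ldots,u_t\}$ with $\phi(uu_i)=i$ and $\phi(vw)=1$, the paper shows that for \emph{every} $\gamma\in\{t+1,\ldots,k\}$ there is a $(1,\gamma,u,v)$-critical path through $u_1$ and $w$. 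So your assertion that $|C_u(\phi)\cap C_v(\phi)|\ge 2$ already suffices is false, and the subsequent ``it remains to handle $|C_u(\phi)\cap C_v(\phi)|=1$'' case analysis is built on this mistaken premise.

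The paper's argument instead \emph{exploits} the fact that all candidate colors are bad: this forces $d(u_1)\ge k-t+1$, hence $t=k-\Delta(G)+1$ exactly and $C_{u_1}(\phi)=C_w(\phi)=\{2,\ldots,t\}$. One then recolors $vw$ with $2$; the same reasoning produces critical paths through $u_2$ for every $\gamma\ge t+1$. The decisive move is now to swap the colors on $uu_1$ and $uu_2$: by Fact \ref{one-bichromatic-path} the old $(1,t+1)$- and $(2,t+1)$-paths through $u_1$ and $u_2$ respectively cannot survive the swap, so coloring $uv$ with $t+1$ is acyclic. Your proposal gestures toward recoloring $vw$, but without the correct count you never obtain the structural information ($C_{u_1}=C_w=\{2,\ldots,t\}$) needed to make that recoloring legal, nor the idea of swapping colors at $u$ to break two families of critical paths at once.
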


\begin{proof} Suppose to the contrary that $v$ has a neighbor $u$
whose degree is at most $k-\Delta(G)+2$. Let $N(v)=\{u,w\}$ and
$N(u)=\{v, u_1,u_2,\ldots,u_t\}$, where $t\le k-\Delta(G)+1$. Then
the graph $G'=G-uv$ admits an acyclic edge $k$-coloring $\phi$ by
the choice of $G$ with $\phi(uu_i)=i$ for $1\le i\le t$. Since
$d(u)+d(v)\le \Delta(G)+2$, we have $\phi(wv)\in F_u(\phi)$ by Fact
\ref{uv-edge-degree}, say $\phi(wv)=1$.   Then for any $t+1\le i\le
k$, there is a $(1,i,u,v)$-critical path with respect to $\phi$
through $u_1$ and $w$, since otherwise we can color $uv$ with $i$
properly with avoiding bichromatic cycle, which is a contradiction
to the choice of $G$. Thus $d(u_1)\ge k-t+1$. This implies that
$t=k-\Delta(G)+1$ and $C_{u_1}(\phi)=C_{w}(\phi)=\{2,3,\ldots,t\}$.
Recolor $wv$ with 2, by the same argument, there is a
$(2,i,u,v)$-critical path  through $u_2$ and $w$ for any $t+1\le
i\le k$. Now exchange the colors on $uu_1$ and $uu_2$, and color
$uv$ with $t+1$. The resulting coloring is an acyclic edge coloring
of $G$ using $k$ colors by Fact \ref{one-bichromatic-path}, a
contradiction.
\end{proof}

In \cite{hou2011}, Hou et al. got the following lemma.

\begin{lemma} \cite{hou2011}\label{lem3vertex}
Let $G$ be an acyclically edge $k$-critical graph with $k\ge
\Delta(G)+2$ and $v$ be a $3$-vertex of $G$. Then the neighbors of
$v$ are $(k-\Delta(G)+2)^+$-vertices.
\end{lemma}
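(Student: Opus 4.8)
The plan is to argue by contradiction, mimicking the proof of Lemma~\ref{neighbor of 2-vertex} but with an extra case split caused by the larger degree of $v$. Suppose $v$ is a $3$-vertex having a neighbour $u$ with $d(u)\le k-\Delta(G)+1$, and set $N(v)=\{u,x,y\}$ and $N(u)=\{v,u_1,\dots,u_t\}$ with $t=d(u)-1\le k-\Delta(G)$. Since $k\ge\Delta(G)+2>\Delta(G)$ forces $\Delta(G)\ge 3$, Lemma~\ref{neighbor of 2-vertex} applied to a hypothetical $2$-vertex $u$ would make $v$ a $(k-\Delta(G)+3)^+$-vertex, which is impossible; hence $d(u)\ge 3$ and $t\ge 2$. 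By criticality, $G-uv$ has an acyclic edge $k$-colouring $\phi$, and we may take $\phi(uu_i)=i$ for $1\le i\le t$. Because $d(u)+d(v)\le k-\Delta(G)+4\le k+1$, Fact~\ref{uv-edge-degree} gives $F_u(\phi)\cap F_v(\phi)\ne\emptyset$; as $F_v(\phi)=\{\phi(xv),\phi(yv)\}$ has two elements, at least one lies in $\{1,\dots,t\}$, say $\phi(xv)=1$.

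I would then record the obstructions to colouring $uv$: if a colour $i\notin F_u(\phi)\cup F_v(\phi)$ cannot be used on $uv$, then for some $\beta\in F_u(\phi)\cap F_v(\phi)$ there is a $(\beta,i,u,v)$-critical path; such a path must pass through the neighbour of $u$ on the edge coloured $\beta$ and through the neighbour of $v$ on the edge coloured $\beta$, and in particular $i$ appears at both of these vertices. Now split into the case $\phi(yv)\notin\{1,\dots,t\}$ and the case $\phi(yv)=2\in\{1,\dots,t\}$. In the first case $F_u(\phi)\cap F_v(\phi)=\{1\}$, so every colour outside $F_u(\phi)\cup F_v(\phi)$ must appear at both $u_1$ and $x$; counting these colours against $d(u_1),d(x)\le\Delta(G)$ forces $t=k-\Delta(G)$ and determines $C_{u_1}(\phi)$ and $C_x(\phi)$ exactly, and further recolourings (recolour $xv$ inside $C_x(\phi)$, recolour $uu_1$ inside $C_{u_1}(\phi)$) pin down the analogous data for $u_2,\dots,u_t$ and for $y$. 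In the second case $F_u(\phi)\cap F_v(\phi)=\{1,2\}$, the bad colours for $uv$ split between critical paths through $u_1,x$ and through $u_2,y$, and one has to recolour one of $xv,yv$ out of $\{1,\dots,t\}$, reducing essentially to the first case, or perform a colour swap on two edges at $u$, again extracting enough structure on the degrees and missing sets of $u_1,\dots,u_t,x,y$.

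With this rigid picture, the contradiction comes from a short chain of recolourings: recolour $xv$ inside $C_x(\phi)$ to move $F_u(\phi)\cap F_v(\phi)$ onto a convenient colour, recolour $uu_1$ (or swap colours on two edges at $u$), and recolour $yv$; at each step one checks, using Fact~\ref{one-bichromatic-path} together with the computed missing sets, that no new bichromatic cycle is created — the only cycle that could run through a recoloured edge at $v$ would also use the other edge at $v$, and the relevant second colour is missing at one of its endpoints, so it cannot close up. After these moves some colour becomes simultaneously available at $u$ and at $v$ and free of both possible critical paths, so $uv$ can be coloured, extending $\phi$ to an acyclic edge $k$-colouring of $G$ — contradicting the choice of $G$.

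The main obstacle is the second case, where both edges at $v$ are coloured from $\{1,\dots,t\}$. There the $k-t$ candidate colours for $uv$ are distributed between the two families of critical paths, so no single tight degree inequality drops out immediately; one must recolour one of $xv,yv$, or swap colours at $u$, to break this configuration, each time re-establishing acyclicity and re-checking which recolourings remain legal. Ruling out the coincidences $x\in N(u)$ or $y\in N(u)$ — which the missing-colour sets exclude once computed — and controlling the interaction between the neighbours of $u$ and the two attached neighbours of $v$ is the delicate bookkeeping in the proof.
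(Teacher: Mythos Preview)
The paper does not contain a proof of this lemma: it is quoted verbatim from \cite{hou2011} (``In \cite{hou2011}, Hou et al.\ got the following lemma'') and used as a black box. So there is no in-paper argument to compare your proposal against.

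As for the proposal itself, the overall strategy --- delete $uv$, invoke Fact~\ref{uv-edge-degree} to force $F_u\cap F_v\ne\emptyset$, and then analyse critical paths to squeeze out a degree contradiction and a terminal recolouring --- is exactly in the spirit of the paper's own proof of Lemma~\ref{neighbor of 2-vertex}, and your Case~1 (only one of $\phi(xv),\phi(yv)$ lies in $F_u(\phi)$) goes through essentially as you describe: the $k-t-1$ forbidden colours for $uv$ all appear at $u_1$ and at $x$, forcing $t=k-\Delta(G)$ and $C_{u_1}(\phi)=C_x(\phi)=\{2,\dots,t,\phi(yv)\}$; recolouring $xv$ with $2$ yields the same picture for $u_2$, and then the swap on $uu_1,uu_2$ followed by colouring $uv$ with a colour in $\{t+1,\dots,k\}\setminus\{\phi(yv)\}$ finishes, using Fact~\ref{one-bichromatic-path}. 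What you have written for Case~2 ($F_v(\phi)\subseteq F_u(\phi)$), however, is only a plan, not an argument: you say one should recolour $xv$ or $yv$ out of $\{1,\dots,t\}$ ``reducing essentially to the first case'', but you have not shown such a recolouring exists without creating a bichromatic cycle through the edges $xv,yv$, nor have you pinned down $C_x(\phi)$ or $C_y(\phi)$ well enough to know which colours are available. The counting that worked in Case~1 (all $k-t-1$ colours forced onto a single $u_j$) no longer applies, since the obstructions split between $u_1$ and $u_2$, so the degree bound on $t$ does not fall out immediately and the missing-colour sets are not determined. This is the genuine gap: Case~2 needs an explicit recolouring sequence with the acyclicity checks spelled out, and until that is written down the proof is incomplete.
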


Lemma \ref{lem3vertex} implies that the neighbors of a $3$-vertex
are $4^+$-vertices in an acyclically edge $(\Delta(G)+2)$-critical
graph. Now we give the following lemma.

\begin{lemma}  \label{3-vertex-adjacent-4-vertex}
Let $v$
be a $3$-vertex of   an acyclically edge $(\Delta(G)+2)$-critical graph $G$ with neighbors $x,y,z$. If $d(x)=4$, then

\begin{enumerate}
\item both  $y$ and $z$ are $5^+$-vertices.
\item one of vertices in $\{y,z\}$, say $y$, is  adjacent to at least three $4^+$-vertices. The other vertex $z$ is adjacent to at least two $4^+$-vertices. Moreover, if $d(z)= 5$, then $z$ is adjacent to at least three $4^+$-vertices.
\end{enumerate}
\end{lemma}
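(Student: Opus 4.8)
The overall strategy is the same local-discharging-by-contradiction scheme used in the previous lemmas: we fix the 4-vertex neighbour $x$, delete a carefully chosen edge incident to $v$, extend an acyclic edge $k$-coloring (here $k=\Delta(G)+2$) of the smaller graph, and then analyse the obstructions that prevent us from colouring the deleted edge. By Lemma \ref{lem3vertex} applied with $k=\Delta+2$, every neighbour of $v$ is already a $4^+$-vertex, so $y,z$ are $4^+$-vertices to begin with; the work is to upgrade this.

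For part (1), suppose $d(y)=4$ (the case $d(z)=4$ is symmetric). Delete $vx$ and take an acyclic edge $k$-coloring $\phi$ of $G-vx$. Since $d(v)+d(x)=7 < k+2 = \Delta+4$ whenever $\Delta\ge 4$ — and $\Delta\ge 3$ always, with $\Delta=3$ handled separately since then $x$ being a 4-vertex is impossible — Fact \ref{uv-edge-degree} forces $F_v(\phi)\cap F_x(\phi)\neq\emptyset$, and in fact gives a lower bound on $\sum d(x_i)+d(v)+d(x)$ over the matched colours. With $d(x)=4$ (so $x$ has three other neighbours $x_1,x_2,x_3$) and $d(v)=3$ (two other neighbours $y,z$), the set of colours available to be put on $vx$ is large — roughly $k - (d(v)-1) - (d(x)-1) = k-4$ colours are proper — and for each such proper colour $i$, non-colourability forces a bichromatic critical path between $v$ and $x$ through one of the few low-degree vertices $y,z,x_1,x_2,x_3$. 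Counting: each neighbour $w$ of $v$ or $x$ can "absorb'' at most $d(w)-1$ of these critical paths (one per colour on $w$, roughly), so $\sum_{w}(d(w)-1)$ over the at most five relevant neighbours must be at least $k-4 = \Delta-2$; plugging $d(y)=d(x)=4$, $d(v)=3$, and $d(z)\le\Delta$ yields a contradiction once $\Delta$ is not too small, and the few small-$\Delta$ residual cases are finished by the same recolouring trick (swapping two colours at $x$ as in the proof of Lemma \ref{neighbor of 2-vertex}) together with Fact \ref{one-bichromatic-path}. The main obstacle here is bookkeeping the exact number of forced critical paths versus the absorption capacity of each neighbour, and making sure the recolouring at the end does not create a new bichromatic cycle — this is where Fact \ref{one-bichromatic-path} is essential.

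For part (2), we know from (1) that $y,z$ are $5^+$-vertices. Now delete $vx$ again and argue with the extended coloring $\phi$; the forced critical paths between $v$ and $x$ must pass through $N(x)\cup\{y,z\}$. Let $a$ be the number of critical paths through $y$ and $b$ the number through $z$. As above, $a\le d(y)-1$, $b\le d(z)-1$, and the three neighbours $x_1,x_2,x_3$ of $x$ (now also $4^+$-vertices by Lemma \ref{lem3vertex}, since they are neighbours of... wait, they need not be, so we only use $d(x_i)\ge 2$) absorb the remainder. By comparing $a+b$ plus the $x_i$-contribution against the $\Delta-2$ colours that must be killed, one extracts that $y$ must absorb close to its full capacity, and moreover the neighbours of $y$ that sit on these critical paths are themselves forced to have large degree (each such neighbour is the far endpoint-neighbour of a critical path, hence by the $S_{uv}$-type argument has degree at least about $k - d(x) + 2$). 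Quantitatively this forces $y$ to have at least three $4^+$-neighbours, and $z$ at least two; and if $d(z)=5$ exactly, the slack disappears and $z$ too must have three $4^+$-neighbours. The hard part of this half is that the roles of $y$ and $z$ are not a priori symmetric after we fix which edge we delete, so one must either choose the deletion adaptively (delete whichever of $vy,vz$ gives the cleaner count) or run the deletion argument from $vx$ and carefully track the asymmetry introduced by the colour $\phi(vx)$ once reinstated; I expect this asymmetry-management, rather than any single inequality, to be the crux of the proof.
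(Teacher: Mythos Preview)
Your counting heuristic does not actually produce a contradiction. You write that each neighbour $w$ of $v$ or $x$ can ``absorb'' at most $d(w)-1$ critical paths and then sum over all five of $y,z,x_1,x_2,x_3$, but this overcounts badly: a $(\alpha,i,v,x)$-critical path exists only for colours $\alpha\in F_v(\phi)\cap F_x(\phi)$, so absorption happens only through the (at most two) neighbours of $v$ whose edge-colour also appears at $x$, not through all five neighbours. If, say, $|F_v\cap F_x|=2$ with $\phi(vy),\phi(vz)\in F_x$, then the bound you get is $(d(y)-1)+(d(z)-1)\ge \Delta-2$; taking $d(y)=4$ and $d(z)=\Delta$ satisfies this with room to spare, so no contradiction follows. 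The same objection applies to your argument for part (2): once $z$ is allowed to have large degree, the mere inequality on $a+b$ says nothing about how many $4^+$-neighbours $y$ or $z$ must have.

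What the paper actually does, and what you are missing, is a preliminary recolouring argument showing that one may assume $|F_v(\phi)\cap F_x(\phi)|=1$; this already takes real work (an exchange argument on the colours at $x$ together with recolourings of $vy$ and $vz$). Once $|F_v\cap F_x|=1$, all $\Delta-2$ critical paths are forced through a \emph{single} neighbour of $v$, say $y$, which immediately gives $\{5,\ldots,k\}\subseteq S_{vy}(\phi)$ and hence $d(y)=\Delta$. But this still says nothing about $z$; the paper then runs a chain of five further recolouring claims (showing $4\in F_y$, $1\in F_z$, a $(4,2,v,y)$-critical path through $z$, etc.) to force $\{1,2,3,4\}\subseteq F_z(\phi)$, and from there extracts the $4^+$-neighbour counts for $y$ and $z$ and the degree bounds $d(y),d(z)\ge 5$. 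None of this comes from a global counting inequality; each step is a specific recolouring that either produces an acyclic colouring of $G$ or pins down another colour in $F_y$ or $F_z$. Your proposal needs to replace the absorption count with this kind of step-by-step recolouring cascade.
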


\begin{proof} We may assume that $N(x)=\{v,x_1,x_2,x_3\}$. Let $k=\Delta(G)+2$ for
simplicity. Then for any cyclic edge $k$-coloring $\phi$ of
$G'=G-vx$,  $F_v(\phi)\cap F_x(\phi)\neq \emptyset$ by Fact
\ref{uv-edge-degree}.

Now we show that there exists an cyclic edge $k$-coloring $\phi$ of
$G'=G-vx$, such that $|F_v(\phi)\cap F_x(\phi)|=1$. Otherwise, for
any acyclic edge coloring $\varphi$ of $G'$,  $|F_v(\varphi)\cap
F_x(\varphi)|=2$. Assume that  $\varphi(xx_i)=i$ for $1\le i\le 3$,
$\varphi(vy)=1$ and $\varphi(vz)=2$. Let $S$ (or $T$) denote the set
of colors such that for any $i\geq 4$ and $i\in S$ (or $i\in T$),
there is  a $(1,i,v,x)$-critical path (or $(2,i,v,x)$-critical path)
through $x_1$ and $y$ (or $x_2$ and $z$). Then $S\cup
T=\{4,5,\ldots,k\}$ by the choice of $G$. Assume that $T\neq
\emptyset$.

If there exists a  color $\alpha\in T$ such that $\alpha$ does not appear on $y$, then recolor $vy$ with $\alpha$ to get a partial edge-coloring $\phi$, i.e., $\phi(vy)=\alpha$, and $\phi(e)=\varphi(e)$ for all other edges $e$ in $G$.
Then $\phi$ is an acyclic edge coloring of $G'$ with $|F_v(\phi)\cap F_x(\phi)|=1$ by Fact \ref{one-bichromatic-path}. Otherwise, $T\subseteq F_y(\varphi)$. Similarly,  $S\subseteq F_z(\varphi)$. This implies that $d(y)=d(z)=\Delta(G)$ and  $S\cap T=\emptyset$.

Exchange the colors on $xy$ and $xz$. Then for any $i\in S$, there is a $(2,i,v,x)$-critical path from through $x_2$ and $y$; for any $j\in T$, there is a $(1,j,v,x)$-critical path through $x_1$ and $z$. This implies that $S_{xx_i}(\varphi)=S\cup T$ for $i=1,2$. If recolor $vy$ with 3, then for any $i\in S$, there is a $(3,i,v,x)$-critical path through $x_3$ and $vy$. If exchange the colors on $xx_1$ and $xx_2$, then for any $i\in T$, there is a $(3,i,v,x)$-critical path through $x_3$ and $y$. We recolor $vz$ with 3 and choose a color from $T$ to color $vx$. It is easy to verify that the resulting coloring is an acyclic edge coloring of $G$, a contradiction.

Now suppose that $\phi$ is an acyclic edge $k$-coloring  of
$G'=G-vx$ with $|F_v(\phi)\cap F_x(\phi)|=1$. Let $\phi(xx_i)=i$ for
$1\le i\le 3$, $\phi(vy)=1$ and $\phi(vz)=4$. Then for any $5\le
i\le k$, there is a $(1,i,v,x)$-critical path through $x_1$.

{\bf Claim 1.} $4\in F_y(\phi)$

\begin{proof}
By contradiction, $4\notin F_y(\phi)$.  Since $|C_y(\phi)|\ge 2$,
$\{2,3\}\cap C_y(\phi)\neq \emptyset$, say $2\in C_y(\phi)$. Recolor
$vy$ with 2, the resulting coloring is also a acyclic edge coloring
of $G'$. Then there is a $(2,i,v,x)$-critical path  through $x_2$
for any $i\ge 5$. Note that $\{1,2\}\subseteq  F_z(\phi)$, since
otherwise we can recolor $vy$ with 4, $vz$ with 1 or 2, and color
$xv$ with 5. This implies that there is a color $i_0$ with $5\le
i_0\le k$ such that $i_0$ does not appear on $z$. Recolor $vz$ with
$i_0$ and color $xv$ with 4.\end{proof}

It follows from Claim 1 that $d(y)=\Delta(G)$ and
$S_{vy}(\phi)=\{4,5,\ldots,k\}$.

{\bf Claim 2.} $1\in F_z(\phi)$.

\begin{proof} Otherwise, let $\phi'(vy)=2$, $\phi'(vz)=1$, and $\phi'(e)=\phi(e)$ for the other edges $e$ of $G'$. Then $\phi'$ is also an acyclic edge coloring of $G'$. Note that coloring $vx$ with $i$ does not create bichromatic cycle containing the edges $vz$ and $xx_1$ by Fact \ref{one-bichromatic-path} for any $4\le i\le k$. Thus, there is a $(2,i,v,x)$-critical path for any $4\le i\le k$, since otherwise, we can color $vx$ with $i$. Similarly, let  $\phi'(vy)=3$, $\phi'(vz)=1$, and $\phi'(e)=\phi(e)$ for the edges $e$. Then  there is a $(3,i,v,x)$-critical path for any $4\le i\le k$ with respect to $\phi'$. Now we give an acyclic edge coloring $\varphi$ of $G$ by letting $\varphi(xx_2)=\phi(xx_3)$, $\varphi(xx_3)=\phi(xx_2)$, $\varphi(vy)=2$, $\varphi(vx)=5$, and $\varphi(e)=\phi(e)$ for the other edges $e$ of $G$, a contradiction.\end{proof}

{\bf Claim 3.} There is a $(4,2,v,y)$-critical path  through $z$ with respect to $\phi$.

\begin{proof}
Otherwise, recoloring $vy$ with 2 results in a new acyclic edge
coloring $\phi'$ of $G'$. Then there is a $(2,i,v,x)$-critical path
through $x_2$ for any $i\in\{5,\ldots,k\}$ with respect to $\phi'$.
This implies that  $C_{x_2}(\phi)\subseteq\{1,3,4\}$.

Now we show that there is a $(1,4,v,x)$-critical path through $x_1$
and $y$ with respect to $\phi$. Otherwise, coloring $vx$ with 4 and
uncoloring $vz$ results in an acyclic edge coloring $\varphi$ of
$G''=G-vz$. If $2\in C_{y}(\varphi)$, then color $vz$ with 2.
Otherwise, there exists a color $i_0\in \{5,6,\ldots,k\}$ such that
$i_0\in C_{y}(\varphi)$. Color $vz$ with $i_0$ and get an acyclic
edge coloring of $G$.

By the same argument above, there is a $(2,4,v,x)$-critical path through $x_2$ and $y$ if we recolor $vy$ with 2. Let $\varphi(xx_1)=2$, $\varphi(xx_2)=1$,  $\varphi(vx)=5$, and $\varphi(e)=\phi(e)$ for the other edges $e$ of $G$. Then $\varphi$ is an acyclic edge coloring of $G$, a contradiction. \end{proof}




As in the proof of Claim 3, there is a $(3,4,v,y)$-critical path
through $z$. Thus $\{1,2,3,4\}\subseteq F_z({\phi})$ by Claims 2 and
3. This implies that at least two colors in $\{5,6,\ldots,k\}$ such
that neither appear on $z$, say 5, 6. Note that there is a
$(1,4,v,x)$-critical path  through $x_1$ and $y$, since otherwise,
recolor $vz$ with 2 and color $vx$ with 4. Recolor $vz$ with $5$ (or
6), the resulting coloring $\phi'$ is also an acyclic edge coloring
of $G'$. As in the proof of Claim 2, there is a $(5,i, v,
y)$-critical path (or $(6,i, v,y)$-critical path)  through $z$ with
respect to $\phi'$ for any $i\in \{2,3\}$. Let $\phi(yy_i)=i$ for
$i=4,5,6$ and $\phi(zz_i)=i$ for $i=1,2,3$. Then
$\{1,2,3,i\}\subseteq F_{y_i}(\phi)$ for $i=4,5,6$ and
$\{4,5,6,i\}\subseteq F_{z_i}(\phi)$ for $i=2,3$. Thus $y$ is
adjacent to at least three $4^+$-vertices and $z$ is adjacent to at
least two $4^+$-vertices.

{\bf Claim 4.} $d(y)\ge 5$

\begin{proof}
Otherwise, $\Delta(G)=4$ and $F_{y_i}(\phi)=\{1,2,3,i\}$ for any
$i=4,5,6$. Exchange the colors on $yy_4$ and $yy_5$, recolor $vz$
with 6, and color $vx$  with 5. It is easy to see that the resulting
coloring is an acyclic edge coloring of $G$, a contradiction.
\end{proof}

{\bf Claim 5.} $d(z)\ge 5$

\begin{proof}
Otherwise, $d(z)=4$ and $F_{z}(\phi)=\{1,2,3,4\}$. As in the proof
of Claim 3, there is a $(2,i,v,z)$-critical path (or
$(3,i,v,z)$-critical path) for any $5\le i\le k$ if recoloring $vy$
with 2 (or 3).   This implies $F_{z_i}(\phi)=\{i, 4,5,\ldots,k\}$.
Exchange the colors on $zz_1$ and $zz_2$ and the argument in Claim 3
works.
\end{proof}

Finally, we prove that if $d(z)=5$, then $n_3(z)\leq 2$. Assume that
$N(z)=\{z_1,z_2,z_3,z_7,v\}$ and $F_z(\phi)=\{1,2,3,4,7\}$. Suppose
that  $n_3(v)\ge 3$, then $d(z_1)=d(z_7)=3$ by Lemma
\ref{lem2vertex}, say $N(z_i)=\{v,z'_i,z''_i\}$ for $i=1,7$. Recolor
$vy$ with 2, $vz$ with 1, color $vx$ with 4, and uncolor $zz_1$.
Then the resulting coloring is acyclic edge coloring $\varphi$ of
$G''=G-zz_1$ by Fact \ref{one-bichromatic-path}. By the choice of $G$, we have $F_{z_1}(\varphi)\cap
F_z(\varphi)\neq \emptyset$.

We first consider the case that $|F_{z_1}(\varphi)\cap
F_z(\varphi)|=1$. If $7\notin F_{z_1}(\varphi)$, then coloring
$zz_1$ with a color from $\{4,5,6\}\setminus F_{z_1}(\varphi)$.
Otherwise, assume that $\varphi(z_1z'_1)=4$ and
$\varphi(z_1z''_1)=7$. Then for any $5\le i\le k$ and $i\neq
7$, there is a $(7,i,z,z_1)$-critical path through $z_7$. This
implies that  $\Delta(G)=5$ and $F_{z_{7}}(\varphi)=\{5,6,7\}$. Note
that $7\notin F_{z_{2}}(\varphi)$, since otherwise recolor $vz$ with
5, $zz_2$ with 1, $zz_7$ with 4, and color $zz_1$ with 2. Similarly,
$7\notin F_{z_{3}}(\varphi)$. Then recolor $zz_2$ with 7, $zz_7$
with 4, and color $zz_1$ with 2.

Now we consider the case that $|F_{z_1}(\varphi)\cap
F_z(\varphi)|=2$. If $7\notin F_{z_{1}}(\varphi)$, then color $zz_1$
with 5. Otherwise, color $zz_1$ with a color from
$\{4,5,6\}\setminus F_{z_{7}}(\varphi)$.

In any case, we can get an acyclic edge coloring of $G$, a
contradiction. \end{proof}

In \cite{basa2009},  Basavaraju and Chandran showed that if a connected graph $G$ with $n$ vertices and $m$ edges satisfies $m\le 2n-1$ and maximum degree $\Delta(G)\le 4$, then $\chi'_a(G)\le 6$. This result can be obtained immediately by Lemmas \ref{neighbor of 2-vertex} and \ref{3-vertex-adjacent-4-vertex}.

\begin{lemma}  \label{k-vertex-2-ver}
Let $v$
be a $t$-vertex of  an acyclically edge $(\Delta(G)+2)$-critical graph $G$ with $t\ge 5$. Then $n_2(v)\leq t-4$. Moreover, if $n_2(v)= t-4$, then $n_3(v)=0$.
\end{lemma}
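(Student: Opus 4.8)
\noindent The plan is to set $k=\Delta(G)+2$ and argue by contradiction: assume $v$ violates the conclusion, i.e.\ either $n_2(v)\ge t-3$, or $n_2(v)=t-4$ and $n_3(v)\ge 1$. Since $v$ is a $t$-vertex we have $\Delta(G)\ge t\ge 5$, and $G$ is $2$-connected by Lemma~\ref{2-connected}, so every neighbor of $v$ has degree at least $2$ and $n_2(v)+n_3(v)+n_{\ge 4}(v)=t$, where $n_{\ge 4}(v)$ denotes the number of neighbors of $v$ of degree at least $4$. In both cases $n_2(v)\ge 1$, so Lemma~\ref{lem2vertex} applies (its hypothesis $k\le 2\Delta(G)-2$ holds as $\Delta(G)\ge 4$) and gives $n_{\ge 4}(v)\ge k-\Delta(G)+1=3$. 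Combining this with the degree identity and the failure hypothesis leaves exactly two configurations: \emph{Case A}, $n_2(v)=t-3$, $n_3(v)=0$, $n_{\ge 4}(v)=3$; and \emph{Case B}, $n_2(v)=t-4$, $n_3(v)=1$, $n_{\ge 4}(v)=3$. Write $x_1,x_2,x_3$ for the three neighbors of $v$ of degree at least $4$. In Case~A I fix two distinct $2$-neighbors $u_1,u_2$ of $v$ (there are $t-3\ge 2$ of them); in Case~B I fix a $2$-neighbor $u_1$ (there are $t-4\ge 1$) and let $w$ be the unique $3$-neighbor. I set $z=u_2$ in Case~A and $z=w$ in Case~B, so $z$ is a neighbor of $v$ with $d(z)\le 3$ and $z\notin\{u_1,x_1,x_2,x_3\}$.

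I would then take an acyclic edge $k$-coloring $\phi$ of $G-vu_1$, which exists by the criticality of $G$. Write $N(u_1)=\{v,u_1'\}$ and $\phi(u_1u_1')=1$. By Fact~\ref{uv-edge-degree} applied to the edge $vu_1$ we get $1\in F_v(\phi)$ (otherwise $d(u_1)+d(v)\ge k+2$, i.e.\ $t\ge\Delta(G)+2$, absurd); let $y$ be the unique neighbor of $v$ with $\phi(vy)=1$. Since $v$ has $t-1$ colored edges in $G-vu_1$, $|C_v(\phi)|=k-(t-1)=\Delta(G)-t+3\ge 3$. For each color $c\in C_v(\phi)$, extending $\phi$ by giving $vu_1$ the color $c$ is proper (at $v$ since $c\notin F_v(\phi)$, at $u_1$ since $c\ne 1$), so by criticality it creates a bichromatic cycle, which, because $d(u_1)=2$, must use the edges $vu_1$ (color $c$) and $u_1u_1'$ (color $1$). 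Deleting $vu_1$ from this cycle therefore leaves a $(1,c,v,u_1)$-critical path with respect to $\phi$; this path starts with the unique $1$-edge $vy$ at $v$ and so continues with a $c$-colored edge at $y$. Over all $c\in C_v(\phi)$ these edges at $y$ are pairwise distinct, so $d(y)\ge|C_v(\phi)|+1=\Delta(G)-t+4\ge 4$. Hence $y\in\{x_1,x_2,x_3\}$, so in particular $y\ne z$, and $a:=\phi(vz)$ lies in $F_v(\phi)\setminus\{1\}$.

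Finally, I would recolor to extend $\phi$ to $G$. Since $|C_v(\phi)|\ge 3$ while $z$ has at most two edges besides $vz$, there is a color $\beta\in C_v(\phi)$ that $\phi$ uses on no edge of $z$ other than $vz$. Recolor $vz$ with $\beta$: this remains an acyclic edge $k$-coloring of $G-vu_1$, because $\beta\notin F_v(\phi)$ and $\beta$ avoids $z$'s other edges, so $vz$ becomes the unique $\beta$-edge at each of its ends and lies on no bichromatic cycle, while erasing the color $a$ from $vz$ creates no cycle. Now give $vu_1$ the color $a$: this is proper (after the recoloring $a$ no longer appears at $v$, and $a\ne 1=\phi(u_1u_1')$), and any bichromatic cycle through $vu_1$ would be a $(1,a)$-cycle, which cannot exist because $vu_1$ is now the unique $a$-edge at $v$. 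The result is an acyclic edge $k$-coloring of $G$, contradicting $\chi'_a(G)>k$. This contradiction proves both assertions of the lemma at once.

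The crux of the argument is the degree bound $d(y)\ge\Delta(G)-t+4\ge 4$ coming from the critical paths: it is exactly what forces $y$ into $\{x_1,x_2,x_3\}$, hence makes the spare low-degree neighbor $z$ distinct from $y$, which in turn frees the color $a=\phi(vz)$ for recoloring $vu_1$. The rest is routine bookkeeping; in particular one should check that possible coincidences among $u_1,u_1',u_2,w,x_1,x_2,x_3$ do no harm, which they do not, since the final recoloring only uses $d(u_1)=2$, $d(z)\le 3$, $|C_v(\phi)|\ge 3$, and $d(y)\ge 4$.
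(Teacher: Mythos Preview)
Your reduction to Cases~A and~B via Lemma~\ref{lem2vertex} is correct, and the argument through step~11 (showing $d(y)\ge 4$, hence $y\in\{x_1,x_2,x_3\}$ and $y\ne z$) is fine. The gap is in the very last step.

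You claim that after recoloring $vz$ with $\beta$ and giving $vu_1$ the color $a=\phi(vz)$, no bichromatic cycle through $vu_1$ can occur ``because $vu_1$ is now the unique $a$-edge at $v$.'' That reasoning is wrong. A $(1,a)$-bichromatic cycle through $vu_1$ needs, at $v$, one edge of color $a$ (namely $vu_1$) and one edge of color $1$; the latter is supplied by $vy$, which you never touched. Concretely, nothing you have proved rules out the following situation in the original $\phi$: the maximal $(1,a)$-bichromatic path through $v$ runs
\[
\cdots - z \xrightarrow{\,a\,} v \xrightarrow{\,1\,} y \xrightarrow{\,a\,} \cdots \xrightarrow{\,a\,} u_1' \xrightarrow{\,1\,} u_1,
\]
i.e.\ its $y$-side terminates at $u_1$. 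After you recolor $vz$ with $\beta$ and set $\phi(vu_1)=a$, this becomes the $(1,a)$-cycle $v\!-\!u_1\!-\!u_1'\!-\!\cdots\!-\!y\!-\!v$. Your established critical paths are $(1,c)$-paths for $c\in C_v(\phi)$; since $a\in F_v(\phi)$, you have no information about $(1,a)$-paths and cannot exclude this configuration. For $t=5$ there is only one candidate for $z$, so you cannot sidestep the obstruction by choosing a different low-degree neighbor, and recoloring $u_1u_1'$ runs into the same lack of control over new critical paths.

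This is exactly why the paper's proof is much longer: after forcing (the analogues of) $\{4,\dots,k\}\subseteq S_{vx_i}(\phi)$ for $i=1,2,3$, it performs a swap of colors on two of $vx_1,vx_2,vx_3$ and then a careful case analysis (Claims~1--2 and Cases~1--3 there) to eliminate the residual critical paths. Your argument captures the opening moves but stops one genuine idea short.
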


\begin{proof}
Suppose that $v$ has neighbors $x_1,x_2,\ldots,x_t$ with
$N(x_i)=\{v, y_i\}$ for $i=5,6,\ldots,t$ and $k=\Delta(G)+2$ for
simplicity. We only have to prove that $d(x_i)\ge 4$ for $1\le i \le
4$. By contradiction, assume that $d(x_4)\le 3$. We only consider
the case that $d(x_4)=3$, say $N(x_4)=\{y'_4,y''_4\}$. The case that
$d(x_4)=2$ is the same, but easier. The graph $G'=G-vx_t$ has an
acyclic edge $k$-coloring $\phi$  with $\phi(vx_i)=i$ for $1\le i\le
t-1$. Since $d(v)+d(x_i)\leq \Delta(G)+3$ for $i\ge 4$, we have
$\phi(x_ty_t)\in \{1,2,3\}$ by Fact \ref{uv-edge-degree}. Without
loss of generality, let $\phi(x_ty_t)=1$. Note that
$d(y_t)=\Delta(G)$  and $C_{y_t}(\phi)=\{2,3\}$, since otherwise,
recolor $x_ty_t$ with a color $i\ge 4$  not appearing on $y_t$, and
color $x_ty_t$ with a coloring  appearing neither on $v$ nor on
$x_i$. Moreover, for any $t\le i\le k$, there is a $(1,i, v,
x_t)$-critical path through $x_1$. Similarly, if recolor $x_ty_t$
with 2 (or 3), then there is a $(2,t, v, x_t)$-critical path (or
$(2,i, v, x_t)$-critical path) through $x_2$ (or $x_3$) for any
$t\le i\le k$.

{\bf Claim 1.} For any color $i$ with $4\le i\le t-1$, there is a $(1,i,v,x_t)$-critical path  through $x_1$ and $y_t$ with respect to $\phi$.

\begin{proof} Otherwise, there exists a color $i_0\in \{4,5,\ldots,t-1\}$ such that there is no $(1,i_0,v,x_t)$-critical path  through $x_1$ and $y_t$. Recolor $vx_t$ with $i_0$, uncolor $vx_{i_0}$ and the colors on the other edges unchange. We get an acyclic edge coloring $\varphi$ of $G''=G-vx_{i_0}$.

We first consider the case that $i_0\ge 5$, say $i_0=5$. Since $d(v)+d(x_5)\leq \Delta(G)+2$, we have $\phi(vx_t)\in \{1,2,3\}$ by Fact \ref{uv-edge-degree}. Color $vx_5$ with $t$ and get an acyclic edge coloring of $G$ by Fact \ref{one-bichromatic-path}, a contradiction.

Now we consider the case that $i_0=4$. Assume that $F_{x_4}(\varphi)=\{\alpha,\beta\}$. If $F_{x_4}(\varphi)\cap \{1,2,3\}= \emptyset$, we are done by Fact \ref{one-bichromatic-path}. Otherwise, let $\alpha=1$. For any $i\ge t$, since there is a $(1,i,v,x_t)$-critical path, there is no $(1,i,v,x_4)$-critical path. If $\beta\in \{2,3\}$, then color $vx_4$ with $t$. If $5\le \beta\le t-1$, then color $vx_4$ with a color appearing neither  on $v$ nor on $x_{\beta}$. Otherwise, color $vx_4$ with a color appearing on neither  $v$ nor on $x_{4}$.
\end{proof}

Recolor $x_ty_t$ with 2 (or 3), as in the proof of Claim 1, there is
a $(2,i,v,x_t)$-critical path (or $(3,i,v,x_t)$-critical path)
through $y_t$ and $x_2$ or ($x_3$). Thus
$S_{xx_i}(\phi)=\{4,5,\ldots,k\}$ for $i=1,2,3$.  For $1\le i<j\le
3$, let $\phi_{ij}$ denote the coloring that exchanging the colors
on $vx_i$ and $vx_j$, and coloring $vx_t$ with $t$ with respect to
$\phi$.

{\bf Claim 2.} There is a coloring in $\{\phi_{ij}: 1\le i<j\le 3\}$ such that there is no bichromatic cycle containing $vv_4$.

\begin{proof}
If $|F_{v_4}(\phi)\cap \{1,2,3\}|\le 1$, then there exist two colors
$i_0,j_0$ with $1\le i_0<j_0\le 3$ such that $i_0,j_0$ neither
appear on $v_4$.  The coloring $\phi_{i_0j_0}$ satisfies the
condition. Otherwise, assume that $F_{v_4}(\phi)=\{1,2,4\}$. If the
coloring $\phi_{12}$ has no bichromatic cycle containing $vv_4$, we
are done. Otherwise,  there is either a $(1,4,v, x_2)$-critical
through $x_4$, or  a $(2,4,v,x_1)$-critical path through $x_4$ with
respect to $\phi_{12}$. In this case,  the coloring $\phi_{13}$ has
no bichromatic cycle containing $vv_4$ by Fact
\ref{one-bichromatic-path}.
\end{proof}

We may assume the coloring $\varphi=\phi_{12}$ is the coloring
containing no bichromatic cycle containing $vv_4$. Let $S=\{x_i:5\le
i\le t-1$, there is a $(i,1,v, x_2)$-critical path through $x_i$
with respect to $\varphi\}$, and $T=\{x_j:5\le j\le t-1$, there is a
$(j,2,v,x_1)$-critical path through $x_j$ with respect to
$\varphi\}$. Then $S\cup T\neq \emptyset$ by the choice of $G$.  We
may assume that $|S|\le1$, since otherwise, let
$S=\{x_{i_1},x_{i_2},\ldots,x_{i_t}\}$,  where $t\ge 2$. We recolor
$vx_{i_p}$ with $\varphi(vx_{i_{p+1}})$ for $p=1,2,\ldots,t-1$,
$vx_{i_t}$ with $\varphi(vx_{i_{1}})$. Then the resulting coloring
has no  bichromatic cycle containing $vx_2$. Similarly,  assume that
$|T|\le 1$. We consider the following three cases.

{\bf Case 1.} $|S|=1$ and $|T|=0$.

Assume that $S=\{5\}$. Then $\varphi(x_5y_5)=1$. We consider the colors not appearing on $y_5$. If there is a color $i\geq t$ such that $i\in C_{y_5}(\varphi)$, then recolor $x_5y_5$ with $i$.  If there is a color $6\le i\le t-1$ such that $i\in C_{y_5}(\varphi)$, then recolor $x_5y_5$ with $i$, $vx_5$ with a color appearing neither   on $v$ nor on $x_i$. If $3\in C_{y_5}(\varphi)$, then recolor $x_5y_5$ with 3, color $vx_5$ with $t+1$. Otherwise,  $4\in C_{y_5}(\varphi)$. Then recolor $x_5y_5$ with 4 and get a new coloring $\varphi'$ of $G$. If $\varphi'$ is  acyclic, we are done. Otherwise, recolor $vx_5$ with a color appearing neither on $v$ nor $x_4$ in $\varphi'$.

{\bf Case 2.} $|T|=1$ and $|S|=0$.

This case is the same as  Case 1.

{\bf Case 3.} $|S|=|T|=1$.

Assume that $S=\{5\}$ and $T=\{6\}$. Then $\varphi(x_5y_5)=1$ and
$\varphi(x_6y_6)=2$. If we exchange the colors on $vx_5$ and $vx_6$,
then there is a $(1,6,v,x_5)$-critical path  through $y_5$ and a
$(2,5, v, x_6)$-critical path through $y_6$, since otherwise, the
argument in Case 1 and Case 2 works. If there exists a color
$\alpha$ with $\alpha\ge t$ or $\alpha=1$, such that $\alpha\in
C_{y_6}(\varphi)$, then recolor $x_6y_6$ with $\alpha$ and Case 1
works. If $3\in C_{y_6}(\varphi)$, we recolor $x_6y_6$ with 3,
exchange the colors $vx_6$ and $vx_t$, and then Case 1 works.
Otherwise, there is a color $5\le j_0\le t-1$ such that $j_0\in
C_{y_6}(\varphi)$. Recolor $x_6y_6$ with $j_0$ in $\varphi$ and the
colors on other edges unchange. Then we get a new coloring
$\varphi'$ of $G$. In $\varphi'$, if there is no bichromatic cycle
containing $vx_6$, then Case 1 works. Otherwise, recoloring $vx_6$
with a color appearing neither on $v$ nor on $x_{j_0}$, and then
Case 1 also works.
\end{proof}

\begin{lemma}  \label{5-vertex}
Let $v$
be a $5$-vertex of  an acyclically edge $(\Delta(G)+2)$-critical graph $G$. Then $n_2(v)+n_3(v)\leq 3$.
\end{lemma}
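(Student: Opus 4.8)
The plan is to argue by contradiction, assuming a $5$-vertex $v$ in an acyclically edge $(\Delta+2)$-critical graph $G$ has $n_2(v)+n_3(v)\ge 4$, and to derive a contradiction by combining the structural results already proved, especially Corollary \ref{2and3-vertex-+2}, Lemma \ref{neighbor of 2-vertex}, Lemma \ref{lem3vertex}, and Lemma \ref{3-vertex-adjacent-4-vertex}. First I would dispose of the easy sub-case: if $n_2(v)\ne 0$, then by Corollary \ref{2and3-vertex-+2} we get $n_2(v)+n_3(v)\le \Delta(G)-3$, and since a $2$-vertex forces (via Lemma \ref{lem3vertex}, whose neighbors of $3$-vertices are $4^+$-vertices, so in particular $v$ being a neighbor of a $2$-vertex… ) — more carefully, Lemma \ref{neighbor of 2-vertex} with $k=\Delta+2$ says neighbors of a $2$-vertex are $5^+$-vertices, so $v$ itself must be a $5^+$-vertex, which it is; and Lemma \ref{lem2vertex} applied with $k=\Delta+2\le 2\Delta-2$ (valid once $\Delta\ge 4$) says $v$ is adjacent to at least $3$ vertices of degree at least $4$, i.e. $n_2(v)+n_3(v)\le 2$, already better than needed. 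So the genuine case is $n_2(v)=0$ and $n_3(v)\ge 4$: $v$ has at least four $3$-neighbors among its five neighbors.

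So assume $n_3(v)\ge 4$; write $N(v)=\{x_1,x_2,x_3,x_4,x_5\}$ with $d(x_i)=3$ for $i=1,2,3,4$. The idea is to uncolor one edge, say $vx_5$ — or better, to choose the uncolored edge among the edges to $3$-neighbors — and analyze the resulting acyclic edge $k$-coloring $\phi$ of $G'=G-vx_5$ (or $G-vx_4$), where $k=\Delta+2$. By Fact \ref{uv-edge-degree}, since $d(v)+d(x_5)\le \Delta+3$ (if $d(x_5)=3$) or $\le \Delta+5$ (if $x_5$ has higher degree), the overlap $F_v(\phi)\cap F_{x_5}(\phi)$ is nonempty when $d(x_5)$ is small, and in any case one analyzes the critical paths forced by the impossibility of recoloring $vx_5$. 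Here the key leverage is that each $3$-neighbor $x_i$ ($i\le 4$) has only two other edges, so the color sets $F_{x_i}(\phi)$ are tiny ($|F_{x_i}|\le 3$), and hence $|C_{x_i}(\phi)|\ge k-3$ is large; this severely restricts which critical paths can pass through the $x_i$. I would then run the by-now-standard exchange argument: assume colors $\phi(vx_i)=i$, and for the recoloring of $vx_5$ to be blocked for every available color $\alpha\in\{6,\dots,k\}$ (roughly), there must be a $(\,\cdot\,,\alpha,v,x_5)$-critical path through some $x_j$ and $x_5$'s neighbor $y_5$, forcing $d(y_5)=\Delta$ and pinning down $F_{y_5}$ and $F_{x_j}$. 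Counting: at most three of the colors $\{1,\dots,5\}$ can serve as the "first color" of such critical paths (those appearing on $x_5$'s neighbor side), but we have four small-degree neighbors $x_1,\dots,x_4$ to route paths through, and the pigeonhole plus the smallness of $F_{x_i}$ will leave a color with which $vx_5$ can be recolored after a suitable colour swap on two of the $vx_i$'s (using Fact \ref{one-bichromatic-path} to guarantee no new bichromatic cycle), a contradiction.

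I expect the main obstacle to be the careful bookkeeping in the core case $n_3(v)=4$ with $x_5$ possibly of high degree: one must handle the $\Delta=4$ boundary case separately (where $d(v)=5$ forces $v$ to be the unique degree-$5$ vertex issue does not arise, but the color counts become tight) and must organize the critical-path analysis so that the four $3$-neighbors, which give four "escape routes" for recoloring, cannot all be simultaneously blocked by critical paths whose first colors come from the at most three colors living on the far side of the uncolored edge. The cleanest route is probably to mimic the structure of the proof of Lemma \ref{3-vertex-adjacent-4-vertex}: first establish (via a sequence of numbered Claims) that $\{1,\dots,4\}\subseteq F_{y_5}(\phi)$ or the analogous pinning, deduce $d(y_5)=\Delta$, then exploit that two colors in $\{6,\dots,k\}$ are absent on $y_5$; recolor $vx_5$ with such a colour after exchanging the colours on a well-chosen pair $vx_i,vx_j$ with $i,j\le 4$ (so that the smallness of $F_{x_i},F_{x_j}$ prevents a bichromatic cycle through $x_i$ or $x_j$), yielding an acyclic edge $k$-colouring of $G$ — the desired contradiction. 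If this direct swap is blocked, one descends to the further exchange using a third index $x_\ell$, which is available precisely because $n_3(v)\ge 4$ gives us enough low-degree neighbours, and this is where the hypothesis is used essentially.
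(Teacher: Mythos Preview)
Your reduction of the case $n_2(v)\neq 0$ is fine (indeed slightly sharper than the paper's citation: via Lemma \ref{lem2vertex} you get $n_2(v)+n_3(v)\le d(v)-3=2$). The difficulty is entirely in the case $n_2(v)=0$, $n_3(v)\ge 4$, and here your outline does not yet constitute a proof.

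First, you waver on which edge to delete. The paper deletes an edge $vv_5$ with $d(v_5)=3$ (it labels the four $3$-neighbours $v_2,v_3,v_4,v_5$ and the possibly large neighbour $v_1$), so that $F_{v_5}(\phi)$ has exactly two colours. Your narrative sometimes deletes the edge to the high-degree neighbour $x_5$, and your pigeonhole sentence (``at most three of the colours $\{1,\dots,5\}$ can serve as the first colour'') matches neither choice: if the far endpoint is a $3$-vertex there are two such colours, and if it has high degree there can be $\Delta-1$. Deleting the edge to a $3$-vertex is essential, because the whole argument is organised around the two colours $\phi(v_5x_5),\phi(v_5y_5)$.

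Second, the heuristic ``four $3$-neighbours give four escape routes, so a swap on a well-chosen pair $vx_i,vx_j$ frees a colour'' is not how the proof actually goes, and the naive version of it fails. After deleting $vv_5$, the paper does a genuine case split on $|F_v(\phi)\cap F_{v_5}(\phi)|\in\{1,2\}$ and, within the first case, on whether the common colour is the colour of the edge to the big neighbour $v_1$ or to one of the $3$-neighbours $v_2,v_3,v_4$. In Subcase 1.1 (common colour on a $3$-neighbour, say $v_2$) one first forces $\Delta(G)=5$ from the blocking of colours $\ge 6$, then a Claim pins $5\in F_{x_5}(\phi)$ (not ``$\{1,\dots,4\}\subseteq F_{y_5}$''), and the finish uses exchanges on $v_5x_5,v_5y_5$ together with a single swap among the $vv_i$'s. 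Subcase 1.2 (common colour on $v_1$) is handled by transferring the missing edge from $vv_5$ to $vv_2$ and redoing the analysis there; Case 2 is dispatched by pushing one colour on $v_5x_5$ or $v_5y_5$ out of $F_v$ to reduce to Case 1. None of this is captured by a single pigeonhole count, and the step you flag as the ``main obstacle'' (bookkeeping when one neighbour has high degree) is precisely where the paper's Subcase~1.2 does nontrivial work that your sketch does not supply.

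In short: the overall strategy (delete an edge to a $3$-neighbour, block all recolourings by critical paths, then break a path by a swap) is the right one and matches the paper, but the actual proof is a branching case analysis with several recolouring tricks specific to each branch; your counting shortcut does not go through, and the proposal as written has a gap at exactly the point where the real work lies.
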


\begin{proof}
If $n_2(v)>0$, we are done by Corollary \ref{2and3-vertex-+2}.
Suppose that $n_2(v)=0$ and $k=\Delta(G)+2$ for simplicity. By
contradiction, let $v$ be 5-vertex with neighbors
$v_1,v_2,\ldots,v_5$ such that $N(v_i)=\{v,x_i,y_i\}$ for
$i=2,3,4,5$. The graph $G'=G-vv_5$ has an acyclic  edge $k$-coloring
$\phi$ with $\phi(vv_i)=i$ for $i=1,2,3,4$. Then $F_v(\phi)\cap
F_{v_5}(\phi)\neq \emptyset$ by Fact \ref{uv-edge-degree}.

{\bf Case 1.} $|F_v(\phi)\cap F_{v_5}(\phi)|=1$.

Assume that $\phi(v_5y_5)=5$. We consider the color on $v_5x_5$.

{\bf Subcase 1.1.} $\phi(v_5x_5)\in \{2,3,4\}$, say $\phi(v_5x_5)=2$.

For any $i\ge 6$, there is a $(2,i,v,v_5)$-critical path through
$v_2$ and $x_5$, since otherwise, we can color $vv_5$ with $i$. This
implies that $\Delta(G)=5$ and $F_{v_2}(\phi)=\{2,6,7\}$. This is also
a $(5,i,v,v_5)$-critical path if we recolor $vv_2$ with 5 for any
$i\ge 6$.

{\bf Claim 1.} $5\in F_{v_5}(\phi)$.

\begin{proof}
By contradiction, $5\notin F_{v_5}(\phi)$. Firstly, we show that
$\{3,4\}\subseteq F_{v_5}(\phi)$. Otherwise, assume that $3\notin
F_{v_5}(\phi)$. Since there is a $(3,i,v,v_5)$-critical path for any
$i\in \{6,7\}$ if we recolor $v_5x_5$ with 3,
$F_{v_3}(\phi)=\{3,6,7\}$. Exchange the colors on $vv_2$ and $vv_3$,
color $vv_5$ with 6. The resulting coloring is an acyclic edge
coloring of $G$, a contradiction.  Thus
$F_{v_5}(\phi)=\{2,3,4,6,7\}$. Note that there is a $(1,i,v,
v_5)$-critical path through $v_1$ for any $i\in \{6,7\}$, if we
recolor $v_5x_5$ with 1.

Secondly, we show that $\{1,2\}\subseteq F_{y_5}(\phi)$. Otherwise,
choose a color in $\{1,2\}$ not appearing on $y_5$ to recolor
$v_5y_5$, then recolor $v_5x_5$ with 5 and color $vv_5$ with 6. Thus
$F_{y_5}(\phi)=\{1,2,5,6,7\}$.

Let $\phi'(v_5y_5)=3$, $\phi'(v_5x_5)=5$, and $\phi'(e)=\phi(e)$ for
other edges of $G$. Then $\phi'$ is an acyclic edge coloring of
$G'$. If there is no $(3,i,v,v_5)$-critical path with respect to
$\phi'$ for some $i\in \{6,7\}$, then color $vv_5$ with $i$.
Otherwise, exchange the colors on $vv_2$ and $vv_3$, color $vv_5$
with 6.
\end{proof}

It follows from Claim 1 that $\{3,4\}\cap C_{x_5}(\phi)\neq
\emptyset$, say $3\in C_{x_5}(\phi)$. Note that there is a
$(5,3,v_5,x_5)$-critical path, since otherwise, recoloring $x_5v_5$
with 3 results in a new acyclic edge coloring $\phi'$ of $G'$. If
there is no $(3,i,v,v_5)$-critical path with respect to $\phi'$ for
some $i\in \{6,7\}$, then color $vv_5$ with $i$. Otherwise, exchange
the colors on $vv_2$ and $vv_3$, color $vv_5$ with 6.

{\bf Subcase 1.1.1} $4\in C_{x_5}(\phi)$.

By the same argument above, there is a $(5,4,v_5,x_5)$-critical
path. Recolor $x_5v_5$ with 3, $v_5y_5$ with 2, and color $vv_5$
with 5.

{\bf Subcase 1.1.2} $1\in C_{x_5}(\phi)$.

If $1\in C_{y_5}(\phi)$, then recolor $v_5y_5$ with 1 and color
$vv_5$ with 5. Otherwise, recolor $x_5v_5$ with 3, $v_5y_5$ with 2,
and color $vv_5$ with 5.

{\bf Subcase 1.2.} $\phi(v_5x_5)=1$.

For any $i\geq 6$, there is a $(1,i,v,v_5)$-critical path through
$v_1$ and $x_5$. Note that for any $i\in \{2,3,4\}\setminus
F_{x_5}(\phi)$, there is a $(5,i,v_5,x_5)$-critical path through
$y_5$, since otherwise, we can recolor $v_5x_5$ with $i$ and the
argument in Subcase 1.1 works. Thus $5\in F_{x_5}(\phi)$, without
loss of generality, let $d(x_5)=\Delta(G)$ and
$F_{x_5}(\phi)=\{4,5,\ldots,k\}$.


Color $vv_5$ with 2 and uncolor $vv_2$, we get an acyclic
edge coloring $\varphi$ of $G''=G-vv_2$. Then  $F_{v}(\varphi)\cap F_{v_2}(\varphi)\neq \emptyset$ by Fact \ref{uv-edge-degree}. We consider the
following two cases.

{\bf Subcase 1.2.1} $|F_{v}(\varphi)\cap F_{v_2}(\varphi)|=1$.

Assume that $\varphi(v_2x_2)\in F_{v}(\varphi)$. If $\varphi(v_2x_2)\in
\{3,4\}$, then the proof in Subcase 1.1 works. Otherwise,
$\varphi(v_2x_2)=1$. We color $vv_2$ with a color from $\{6,7\}\setminus F_{v_2}(\varphi)$.

{\bf Subcase 1.2.2} $|F_{v}(\varphi)\cap F_{v_2}(\varphi)|=2$.

Note that  $1\notin F_{v_2}(\varphi)$, since otherwise, assume that
$\varphi(v_2x_2)=1$ and $\varphi(v_2y_2)=i$, where $i\in \{3,4\}$.
Choose a color from $\{5,6,7\}\setminus F_{v_i}(\varphi)$ to color
$vv_2$. Thus
$F_{v_2}(\varphi)=\{3, 4\}$. For any $i\ge 5$, there is either a
$(3,i,v,v_2)$-critical path  through $v_3$, or a $(4,i,v,v_2)$-critical path through $v_4$. This implies that there exists a vertex in $\{v_3, v_4\}$,
say $v_3$, such that the colors in $S_{vv_3}(\varphi)$ are
greater than or equal to 5, say $S_{vv_3}(\varphi)=\{5,6\}$. Recolor $vv_3$ with 7 and choose a color from $\{5,6\}\setminus F_{v_4}(\varphi)$ to color $vv_2$, it is possible since $7\in F_{v_3}(\varphi)$.

{\bf Case 2.} $|F_{v}(\phi)\cap F_{v_5}(\phi)|=2$.

Then for any color $i$ with $5\le i\le k$, there is either a
$(\phi(v_5x_5), i, v, v_5)$-critical path, or  a $(\phi(v_5y_5),$ $
i, v, v_5)$-critical path. If there is a color $i_0\ge 5$ such that
$i_0$ not appearing on $x_5$ (or $y_5$), then we recolor $x_5v_5$
(or $y_5v_5$) with $i_0$ and the argument in Case 1 works.
Otherwise, $\{5,\ldots,k\}\subseteq F_{x_5}(\phi)\cap
F_{y_5}(\phi)$. In this case, if there is a vertex $v_i$ for some
$i\in \{2,3,4\}$ such that the colors in $F_{vv_i}(\phi)$ are
greater than or equal to 5, then recolor $vv_i$ with a color from
$\{4,5,\ldots,k\}\setminus F_{vv_i}(\phi)$ and the argument in Case
1 works. Otherwise, $1\in F_{v_5}(\phi)$. Assume that
$\phi(x_5v_5)=1$ and $\phi(y_5v_5)=2$. Since
$\{5,6,\ldots,k\}\subseteq F_{x_5}(\phi)$, $\{3,4\}\cap
C_{x_5}(\phi)\neq \emptyset$, say $3\in C_{x_5}(\phi)$. Then there
is a $(2,3,v_5,x_5)$-critical path through $y_5$, since otherwise,
we can recolor $x_5v_5$ with 3 and choose a color from
$\{4,5,\ldots,k\}\setminus (F_{v_2}(\phi)\cup F_{v_2}(\phi))$ to
color $vv_5$. This implies that $4\notin F_{x_5}(\phi)$. We can
recolor recolor $x_5v_5$ with 4 and choose a color from
$\{4,5,\ldots,k\}\setminus (F_{v_2}(\phi)\cup F_{v_4}(\phi))$ to
color $vv_5$.

In any case, we can get a acyclic edge coloring of $G$ with $\Delta(G)+2$ colors, a contradiction.
\end{proof}


\section{Discharging}

In this section, we give the proofs of Theorem \ref{mad<4} and
\ref{mad<3} by discharging method.

\subsection{Proof of Theorem \ref{mad<4}}

\begin{pf1} By contradiction, assume that $G$ is an acyclically edge $(\Delta(G)+2)$-critical graph, i.e., $G$ is the minimum counterexample to the theorem in terms of the number of edges. Then $G$ is 2-connected by Lemma \ref{2-connected}. It follows from Lemmas \ref{neighbor of 2-vertex} and \ref{3-vertex-adjacent-4-vertex} that $\Delta(G)\ge 4$. Let us assign an initial charge of $\omega(v)=d(v)-4$ to each vertex $v\in V(G)$. It follows from $\mad(G)<4$ that $$\sum\limits_{v\in{V(G)}}\omega(v)<0.$$ We shall design some discharging rules and redistribute weights according to them. Once the discharging is finished, a new weight function $\omega'$ is produced. However, the total sum of weights
is kept fixed when the discharging is in process. On the other hand, we shall show that $\omega'(v)\geq 0$ for all $v\in V(G)$. This leads to an obvious contradiction. The discharging rules are
defined as follows.

$(R_1)$ Every 2-vertex receives 1 from each neighbor.

$(R_2)$ Let $v$ be a 3-vertex. If $v$ is adjacent to a 4-vertex,
then the other neighbors of $v$ are $5^+$-vertices by Lemma
\ref{3-vertex-adjacent-4-vertex}, and $v$ receivers $\frac{1}{2}$
from each adjacent $5^+$-vertex. Otherwise, $v$ receivers
$\frac{1}{3}$ from each neighbor.

Now we show that the resultant charge function  $\omega'(v)\geq 0$ for
any $v\in V(G)$.

If $d(v)=2$, then $\omega(v)=-2$ and the neighbors of $v$ are $5^+$-vertices by Lemma \ref{neighbor of 2-vertex}, each of which gives 1 to $v$, so $\omega'(v)=-2+2\times1=0$.

Suppose that $d(v)=3$.  Then $\omega(v)=-1$ and the neighbor of $v$
are $4^+$-vertices by Lemma \ref{lem3vertex}. If $v$ is adjacent to
a 4-vertex, then the other neighbors of $v$ are $5^+$-vertices by
Lemma \ref{3-vertex-adjacent-4-vertex},  so the other neighbors of
$v$ are $5^+$-vertices by Lemma \ref{3-vertex-adjacent-4-vertex}
each of which gives $\frac{1}{2}$ to $v$, so
$\omega'(v)=-1+2\times\frac{1}{2}=0$. Otherwise,
$\omega'(v)=-1+3\times\frac{1}{3}=0$.

If $d(v)=4$, then $\omega'(v)=\omega(v)=0$.

Suppose that $d(v)\ge5$. Let $u$ be the neighbor of $v$ which has
minimum degree in $N(v)$. If $d(u)\ge 4$, then
$\omega'(v)=\omega(v)>0$. We first consider the case that $d(u)=2$,
then $n_2(v)+n_3(v)\le d(v)-3$ by Lemma \ref{lem2vertex}. If
$n_2(v)\leq d(v)-5$, then $\omega'(v)=\omega(v)-n_2(v)\times
1-n_3(v)\times\frac{1}{2}\ge
d(v)-4-n_2(v)-\frac{d(v)-3-n_2(v)}{2}=\frac{d(v)-n_2(v)-5}{2}\geq
0$. Otherwise, $n_2(v)=d(v)-4$ and $n_3(v)=0$ by Lemma
\ref{k-vertex-2-ver}. Thus $\omega'(v)=\omega(v)-n_2(v)\times 1=0$.
At last we consider the case that $d(u)=3$. If $u$ is adjacent to a
$4$-vertex, then $n_3(v)\le d(v)-2$ if $d(v)\ge6$, and $n_3(v)\le 2$
if $d(v)=5$ by Lemma \ref{3-vertex-adjacent-4-vertex}. Thus
$\omega'(v)\ge \omega(v)-2\times \frac{1}{2}=0$ if $d(v)=5$; and
$\omega'(v)\ge \omega(v)-(d(v)-2)\times \frac{1}{2}\ge 0$ if
$d(v)\ge 6$. Otherwise, $u$ receives $\frac{1}{3}$ from $v$. If
$d(v)=5$, then $n_3(v)\leq 3$ by Lemma \ref{5-vertex}, so
$\omega'(v)\ge \omega(v)-3\times \frac{1}{3}= 0$. Otherwise,
$\omega'(v)\ge \omega(v)-d(v)\times \frac{1}{3}\ge 0$.

Thus $\omega'(v)\geq 0$ for any $v\in V(G)$, a contradiction. This
completes the proof.
\end{pf1}




\subsection{Proof of Theorem \ref{mad<3}}

\begin{pf2}  By contradiction, assume that $G$ is an acyclically edge $(\Delta(G)+1)$-critical graph. Then $G$ is 2-connected by Lemma \ref{2-connected}. Let us assign an initial charge of $\omega(v)=d(v)-3$ to each vertex $v\in V(G)$. It follows from $\mad(G)<3$ that $$\sum\limits_{v\in{V(G)}}\omega(v)<0.$$  The discharging rule is
defined as follows.

$(R)$ Every 2-vertex receives $\frac{1}{2}$ from each neighbor.

Now we show that the resultant charge function  $\omega'(v)\geq 0$
for any $v\in V(G)$. If $d(v)=2$, then $\omega(v)=-1$ and the
neighbors of $v$ are $4^+$-vertices by Lemma \ref{neighbor of
2-vertex}, each of which gives $\frac{1}{2}$ to $v$, so
$\omega'(v)=-1+2\times\frac{1}{2}=0$. If $d(v)=3$, then
$\omega'(v)=\omega(v)=0$. Otherwise, $d(v)\ge 4$.  By Corollary
\ref{2-vertex-+1},  $n_2(v)\le d(v)-2$ and  $\omega'(v)\ge
\omega(v)-(d(v)-2)\times\frac{1}{2}=\frac{d(v)}{2}-2\ge0$.

Thus $\omega'(v)\geq 0$ for any $v\in V(G)$, a contradiction. This
completes the proof.
\end{pf2}




\section{Open Problems}

Since any cycle has acyclic chromatic index three, the bound $\Delta(G)+1$ in Theorem \ref{mad<3} is tight. It is easy to see that the graphs $K_4, K_{3,3}$ have acyclic chromatic index five. So the bounds $\mad<3$ in Theorem \ref{mad<3} and $\Delta(G)+2$ in Theorem \ref{mad<4} are tight.

Determining the acyclic chromatic index of a graph is a hard
problem both from theoretical and algorithmic points of view. Even
for complete graphs, the acyclic chromatic index is still not
determined exactly. It has been shown by Alon and Zaks
\cite{Alon2002} that determining whether $\chi'_a(G)\leq 3$ is
NP-complete for an arbitrary graph $G$.
Now we provide the following open problems.

\begin{problem} Find the necessary or sufficient conditions for a
graph  $G$ with $\chi'_a(G)=\chi'(G)$, where $\chi'(G)$ is the
chromatic index of $G$.
\end{problem}

\begin{problem} Determine  the acyclic  chromatic index of
planar graphs.
\end{problem}


\begin{thebibliography}{80}



\bibitem{Alon1991} {N. Alon},
A parallel algorithmic version of the Local Lemma, Random Structures
Algorithms 2 (1991) 367--378.
\bibitem{Alon2001} N. Alon, B. Sudakov and  A. Zaks,  Acyclic edge
colorings of graphs, J. Graph Theory 37 (2001), 157--167.

\bibitem{Alon2002} N. Alon and A. Zaks, Algorithmic aspects of acyclic
edge colorings, Algorithmica  32 (2002), 611--614.

\bibitem{and2012} L.D. Andersen, E. M\'{a}\v{c}ajov\'{a} and J. Maz\'{a}k, Optimal acyclic
edge-coloring of cubic graphs, J. Graph Theory DOI 10.1002/jgt.20650.


\bibitem{basa2009} M. Basavaraju and L.S. Chandran, Acyclic edge coloring of graphs with maximum degree
4, J. Graph Theory 61 (3) (2009), 192--209.

\bibitem{basa2012} M. Basavaraju and L.S. Chandran, Acyclic edge coloring of
triangle-free planar graphs, J. Graph Theory DOI 10.1002/jgt.20651.

\bibitem{basaSIAM} M. Basavaraju, L.S. Chandran, N. Cohen, F. Havet and T. M\"{u}ller, Acyclic edge-coloring of planar graphs, SIAM J. Discrete Math. 25 (2) (2011), 463--478.

\bibitem{boro2009} M. Borowiecki and A. Fiedorowicz, Acyclic edge colouring of
planar graphs without short cycles, Discrete Mathematics 310 (2010),
1445--1455.





\bibitem{hou2011}J. Hou, N. Roussel and J. Wu, Acyclic chromatic index of planar graphs with triangles, Information Processing Letters, 111 (2011), 836--840.
\bibitem{hou2008} J. Hou, J. Wu, G. Liu and B. Liu, Acyclic edge colorings
of planar graphs and seriell-parallel graphs, Sciences in China A 52
(2009), 605--616.
\bibitem{hou2009} J. Hou, J. Wu, G. Liu and B. Liu, Acyclic edge chromatic
number of outerplanar graphs, J. Graph Theory, 64 (2010), 22--36.


\bibitem{molloy1998} {M. Molloy and B. Reed}, Further Algorithmic
Aspects of the Local Lemma, Proceedings of the 30th Annual ACM
Symposium on Theory of Computing, May 1998 524--529.



\bibitem{mns07a} {R. Muthu, N. Narayanan and C. R. Subramanian}, Acyclic
edge colouring of outerplanar graphs,  Lecture Notes in Computer
Science 4508 (2007), 144--152.

\end{thebibliography}
\end{document}